\newtheorem{theorem}{Theorem}[section]
\newtheorem{lemma}[theorem]{Lemma}
\newtheorem{definition}[theorem]{Definition}
\newtheorem{corollary}[theorem]{Corollary}
\theoremstyle{definition}
\newtheorem{example}[theorem]{Example}
\newcommand{\N}{\mathbb{N}}
\newcommand{\T}{\mathbb{T}}
\newcommand{\C}{\mathbb{C}}
\newcommand{\F}{\mathbb{F}}
\newcommand{\PV}{\mathcal{P}} 
\newcommand{\fA}{\mathfrak A}
\title{Selfless inclusions of $C^*$-algebras}
\author[B. Hayes]{Ben Hayes}
\address{\parbox{\linewidth}{Department of Mathematics, University of Virginia\\
		141 Cabell Drive, Kerchof Hall
		P.O. Box 400137,
		Charlottesville, VA 22904}}
\email{brh5c@virginia.edu}
\author[S. Kunnawalkam Elayavalli]{Srivatsav Kunnawalkam Elayavalli}
\address{\parbox{\linewidth}{Department of Mathematics, University of Maryland, College Park, \\
		4176 Campus Dr, College Park, MD 20742}}
\email{sriva@umd.edu}
\urladdr{https://sites.google.com/view/srivatsavke}
\author[G. Patchell]{Gregory Patchell}
\address{\parbox{\linewidth}{Mathematical Institute, University of Oxford, Andrew Wiles Building, \\ Radcliffe Observatory Quarter, Woodstock Road, Oxford, OX2 6GG, UK}}
\email{greg.patchell@maths.ox.ac.uk}
\urladdr{https://sites.google.com/view/gpatchel}
\author[L. Robert]{Leonel Robert}
\address{\parbox{\linewidth}{Department of Mathematics, University of Louisiana at Lafayette, \\
		217 Maxim Doucet Hall, 1401 Johnston Street, Lafayette, LA 70503, USA}}
\email{lrobert@louisiana.edu}
\begin{document}

\begin{abstract}
    We introduce and study a natural notion of selflessness for inclusions of C*-probability spaces, which in particular implies that all intermediate C*-algebras are selfless in the sense of \cite{robertselfless}. We identify natural sources of selfless inclusions in the realms of $\mathcal{Z}$-stable and free product C*-algebras. As an application of this, we prove selflessness for a new family of C*-probability spaces outside the regime of free products and group C*-algebras. These include the reduced free unitary compact quantum groups.
\end{abstract}
	\maketitle

\section{Introduction}

We continue the recent program on \emph{selflessness} in C*-algebra theory by extending this notion to inclusions and by identifying new examples of selfless C*-probability spaces arising from quantum groups. Selfless C*-probability spaces were introduced by the fourth author in \cite{robertselfless}. This framework unravels certain old problems in C*-algebra theory, as demonstrated in the recent work of Amrutam, Gao, Kunnawalkam Elayavalli, and Patchell \cite{amrutam2025strictcomparisonreducedgroup}. Since then, rapid progress has been made in the field, resulting in the development of several new techniques and applications \cite{elayavalli2025negativeresolutioncalgebraictarski, vigdorovich2025structuralpropertiesreducedcalgebras, HKER, raum2025strictcomparisontwistedgroup, ozawa2025proximalityselflessnessgroupcalgebras}. Notably, Ozawa very recently discovered a new approach to proving selflessness,
and used it to resolve several open questions 
concerning this notion
\cite{ozawa2025proximalityselflessnessgroupcalgebras}.

We introduce and study a natural notion of \emph{selfless inclusions} of C*-probability spaces, extending \cite{robertselfless}. 
We call an inclusion of C*-probability spaces $B\subset  (A,\rho)$ 	selfless if there exist a free ultrafilter $\omega$ and a C*-probability space $(C,\kappa)$, with $C\neq\C$, such that the first factor embedding $(B\subset A)\to (B*C\subset A*C)$ is existential (Definition \ref{def:ssinclusion}). 
Roughly put, while the notion of selfless C*-probability space asserts the existence  of nontrivial elements in an ultrapower $A^\omega$ that are free from $A$ in a nondegenerate way, the notion of selfless inclusion additionally keeps track of a C*-subalgebra $B$ of $A$ from which these freely independent elements are sourced.

If an inclusion $B\subset (A,\rho)$ is selfless,  then  all the intermediate C*-subalgebras $B\subset C\subset A$ are selfless. This gives a way to identify selfless C*-algebras by realizing them
as intermediate C*-subalgebras of selfless inclusions.
Note also that, since selfless C*-algebras are simple, every intermediate C*-subalgebra of a selfless inclusion
is simple. Thus, selfless inclusions are   C*-irreducible in the sense investigated by R{\o}rdam in  \cite{RordamIrred}. 

We prove several results on selfless inclusions that closely parallel results from \cite{robertselfless} on selfless C*-probability spaces. We also show that some of Ozawa's methods from \cite{ozawa2025proximalityselflessnessgroupcalgebras} can be straightforwardly adapted to prove selflessness for  purely infinite  and  for $\mathcal{Z}$-stable inclusions in the sense of Sarkowicz \cite{sarkowicz2025tensorially};   see Theorems \ref{thm: selfless-pi} and \ref{Z-stable selfless}.

In \cite{ozawa2025proximalityselflessnessgroupcalgebras}, Ozawa  defines the PHP (Powers--Haagerup--Pisier) property for groups and shows that groups with this property are C*-selfless. Here we formulate this property in the context of an inclusion and show that it again implies selflessness. Very recently, Flores, Klisse, {\'O} Cobhthaigh, and Pagliero proved that reduced free products satisfying an Avitzour-type condition are selfless \cite{fmmm2025selflessfreeprod}. We revisit their argument here, more explicitly framing it as a verification of the PHP property of an inclusion, and adapting it to suit our application to free complexifications   (Theorem \ref{alaavitzour}).

The free complexification construction was introduced by Banica in the context of compact quantum groups \cite{BanicaNote2008}. Some notable compact quantum groups, such as the free unitary compact quantum group, can be described as free complexifications \cite{BanicaU_n1997}. We show that, under Avitzour-type conditions, (reduced) free complexifications arise as intermediate C*-subalgebras of a selfless inclusion, and are therefore  selfless. This applies to the reduced free unitary quantum group:

\begin{theorem}[Theorem \ref{thm:cpct-qntm-unitry-slflss}]\label{thm:main1}
The reduced free unitary compact quantum group $A_u(n)$ is selfless for $n\geq 2$. 
\end{theorem}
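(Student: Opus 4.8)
The plan is to deduce the selflessness of $A_u(n)$ by exhibiting it as an intermediate C*-subalgebra of a selfless inclusion produced by Theorem~\ref{thm:main2}, and then invoking the general principle (stated in the introduction) that every intermediate C*-subalgebra of a selfless inclusion is itself selfless. So the first task is to set up the right tracial C*-probability space $(A,\tau)$ together with a finite generating set $X$ so that the hypotheses of Theorem~\ref{thm:main2} are met and so that $A_u(n)$ appears inside the resulting inclusion $C^*(\PV A, z^{-1}\PV_{X^*}Az)\subset A*C(\T)$.

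\smallskip

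The natural candidate for $A$ is the reduced C*-algebra of the free orthogonal quantum group $A_o(n)$, equipped with its Haar trace $\tau$, and $X$ the standard generating set of matrix coefficients $\{u_{ij}\}$. Here Banica's work on free complexifications \cite{BanicaNote2008} enters: the free unitary quantum group $A_u(n)$ is the free complexification of $A_o(n)$, which identifies $A_u(n)$ (or the relevant projective version thereof) with precisely the kind of algebra $C^*(\PV A, z^{-1}\PV_{X^*}Az)$ built by amalgamating against $C(\T)=C^*(z)$. I would make this identification explicit, matching the projective version $\PV A$ and the twisted copy $z^{-1}\PV_{X^*}Az$ to the generators of $A_u(n)$ so that $A_u(n)\cong C^*(\PV A, z^{-1}\PV_{X^*}Az)$, or at least sits between $\PV A$ and $A*C(\T)$ as an intermediate subalgebra. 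I would verify that $X^*X\not\subseteq\C 1$, which holds because the matrix coefficients of $A_o(n)$ for $n\geq 2$ do not all collapse to scalars.

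\smallskip

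Next I would check the two structural hypotheses of Theorem~\ref{thm:main2}. For the rapid decay condition~(1), I invoke Vergnioux's theorem \cite{VergniouxRD2007}, which establishes property RD for the free orthogonal (and more generally free) quantum groups relative to the natural length filtration given by the corepresentation degree; this filtration contains the degree-one generating set $X$. For hypothesis~(2), I would argue that the inclusion $N=\PV A''\subseteq M=A''$ of von Neumann algebras falls into Case~I: the free quantum group factors $L^\infty(A_o(n))$ are known to be (interpolated) free group factors, hence II$_1$ factors, and the projective subfactor inclusion should be shown to be irreducible. Establishing irreducibility of $N\subseteq M$—verifying $N'\cap M=\C 1$—is where I expect the main obstacle to lie, since it requires genuine von Neumann algebraic input about the quantum group factor and its projective subalgebra rather than a formal manipulation. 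If irreducibility is awkward to obtain directly, the fallback is Case~II, i.e.\ exhibiting a diffuse subalgebra of $A'\cap(\PV A)^\omega$ inside $A^\omega$, which may be more robust.

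\smallskip

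With both hypotheses confirmed, Theorem~\ref{thm:main2} yields that $C^*(\PV A, z^{-1}\PV_{X^*}Az)\subset A*C(\T)$ is a selfless inclusion. Since $A_u(n)$ sits as an intermediate C*-subalgebra between $C^*(\PV A, z^{-1}\PV_{X^*}Az)$ and $A*C(\T)$ (or coincides with the left endpoint), the intermediate-subalgebra principle for selfless inclusions immediately gives that $A_u(n)$ is selfless. Finally I would restrict to $n\geq 2$ throughout, as this is exactly the range in which $A_o(n)$ has the required non-triviality ($X^*X\not\subseteq\C1$) and factoriality properties. The conceptual crux of the whole argument is the clean translation, via free complexification, of the abstract construction $C^*(\PV A, z^{-1}\PV_{X^*}Az)$ into the concrete quantum group $A_u(n)$; once that dictionary is in place, the selflessness is a formal consequence of Theorem~\ref{thm:main2}.
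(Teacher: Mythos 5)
Your overall strategy---realize $A_u(n)$ as an intermediate subalgebra of the selfless inclusion $C^*(\PV A, z^{-1}\PV_{X^*}Az)\subset A*C(\T)$ produced by Theorem~\ref{thm:main2}, with $A=A_o(n)$, rapid decay from Vergnioux, and Case~I via irreducibility---is exactly the paper's argument for $n>2$. But there are two genuine gaps. First, the step you yourself flag as the obstacle, irreducibility of $N=\PV A_o(n)''\subseteq M=A_o(n)''$, is not supplied, and your proposed shortcut is unsound: it is \emph{not} known that $L^\infty(A_o(n))$ is an (interpolated) free group factor---that is a well-known open problem. What is known (Vaes--Vergnioux) is that it is a full II$_1$ factor for $n\geq 3$, and the paper extracts irreducibility by revisiting their simplicity proof: the averaging operator $P(a)=\frac{1}{n^2}\sum_{ij}v_{ij}av_{ij}$ satisfies $P^{(k)}(a)\to\tau(a)1$, the iterate $P^{(2)}$ has coefficients in $\PV A_o(n)$, and $L^2$-contractivity of $P^{(2k)}$ then yields both factoriality and $N'\cap M=\C 1$. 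Without some such argument, Case~I is unverified.

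Second, your uniform treatment of all $n\geq 2$ breaks at $n=2$. The algebra $A_o(2)\cong C(SU_{-1}(2))$ is coamenable and of type I, so its von Neumann completion is \emph{not} a II$_1$ factor; Case~I is simply unavailable, and the Vaes--Vergnioux machinery (which requires $n\geq 3$) cannot rescue it. Your fallback of verifying Case~II for $A_o(2)$ is also not established. The paper's fix is to change the input algebra: by Banica, $A_u(2)$ is \emph{also} the free complexification of the commutative algebra $C(SU(2))$ (with the Haar state and the entries of the fundamental representation as generators). There, rapid decay holds because $SU(2)$ is a compact Lie group, and Case~II is immediate: $\PV C(SU(2))\cong C(SO(3))$, which commutes with everything in the commutative algebra $C(SU(2))$ and is diffuse for the Haar measure, so $A'\cap(\PV A)^\omega$ contains the diffuse subalgebra $C(SO(3))$. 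Your proof as written therefore covers (modulo the irreducibility argument) only $n\geq 3$, not the claimed range $n\geq 2$.
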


By \cite[Theorem~3.1]{robertselfless} (see also \cite{BarlakSzabo2016seqsplit,FHLRTVW2021modelthryCstar}), this yields the following corollary: 

	\begin{corollary}
For $n\geq 2$,	$A_u(n)$ has stable rank one and strict comparison of positive elements by its unique trace.
	\end{corollary}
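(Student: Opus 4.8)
The plan is to deduce this purely formally from Theorem~\ref{thm:main1} together with the general regularity consequences of selflessness; all of the hard analytic content has already been expended upstream. Concretely, Theorem~\ref{thm:main1} establishes that, for $n\geq 2$, the reduced free unitary compact quantum group $A_u(n)$---equipped with its Haar trace---is a selfless C*-probability space, so the remaining task is only to recall what selflessness structurally guarantees.

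First I would invoke \cite[Theorem~3.1]{robertselfless}, which asserts that every selfless C*-probability space is simple, has stable rank one, and has strict comparison of positive elements by its bounded (quasi)traces (cf.\ also \cite{BarlakSzabo2016seqsplit,FHLRTVW2021modelthryCstar}). Applied to the selfless space $A_u(n)$ produced by Theorem~\ref{thm:main1}, this immediately yields both stable rank one and strict comparison by traces. Should the cited comparison be phrased in terms of bounded $2$-quasitraces, I would additionally use that $A_u(n)$ is exact, so that by Haagerup's theorem every bounded $2$-quasitrace is in fact a trace; this identifies the comparison as being implemented by genuine tracial states.

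Second, to sharpen ``strict comparison by traces'' to ``strict comparison by \emph{the unique} trace,'' I would invoke the known fact that for $n\geq 2$ the reduced $A_u(n)$ carries a unique tracial state, namely the Haar trace (recall $A_u(n)$ is of Kac type, so its Haar state is tracial). Combined with the previous step, this gives strict comparison of positive elements by the unique trace, which together with stable rank one completes the proof.

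The essential difficulty is entirely contained in establishing selflessness, i.e.\ in Theorem~\ref{thm:main1}; once that is in hand the corollary is a routine consequence of the cited structure theorem. The only auxiliary input beyond the quoted results is the uniqueness of the trace on $A_u(n)$, which is \emph{not} a consequence of selflessness by itself---selfless C*-algebras need not have a unique trace---but which is a well-documented feature of reduced free unitary quantum groups, so I expect no genuine obstacle here.
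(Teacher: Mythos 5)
Your proposal follows essentially the same route as the paper, which deduces the corollary in one line from Theorem~\ref{thm:main1} together with \cite[Theorem~3.1]{robertselfless}; the conclusion you reach is correct. However, your final paragraph contains a misstatement worth correcting: uniqueness of the trace \emph{is} a consequence of selflessness, so your appeal to external quantum-group literature is unnecessary (though harmless). Indeed, selflessness of $(A,\rho)$ yields the Dixmier property relative to $\rho$, i.e.\ $\rho(a)1\in \overline{\operatorname{co}}\{uau^*:u\in U(A)\}$ for all $a\in A$ (this is part of what \cite[Theorem~3.1]{robertselfless} delivers, and it is also the $B=A$ case of Theorem~\ref{thm:ssinclusions}(vi) in this paper). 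If $\tau$ is any tracial state, then $\tau$ is constant on $\{uau^*:u\in U(A)\}$ and hence, by norm continuity, $\tau(a)=\tau(\rho(a)1)=\rho(a)$ for all $a$; so when the distinguished state $\rho$ is tracial it is automatically the unique tracial state. This is exactly why the paper can phrase the corollary as ``strict comparison by its unique trace'' citing only \cite[Theorem~3.1]{robertselfless}. Likewise, your hedge about bounded $2$-quasitraces versus traces (via exactness and Haagerup's theorem) is not needed, since the cited theorem states comparison directly with respect to the distinguished trace; it does no harm, but it adds hypotheses (exactness of $A_u(n)$) that the paper's argument never requires.
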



We also show, with the same strategy,  that the reduced half-liberated unitary quantum group $A_u^*(n)$ introduced in \cite{BDDstandardmodel} is selfless.
 

 \subsection*{Acknowledgements} This project was partially supported by the NSF grants DMS-2144739 (Hayes) and  DMS 2350049 (Kunnawalkam Elayavalli). The
third author was supported in part by the Engineering and Physical Sciences Research Council (UK), grant EP/X026647/1. This work has received funding from the European Research Council (ERC) under the European Union’s Horizon 2020 research and innovation programme (Grant agreement No. 850930). We thank S. White for helpful feedback and S. Raum for clarifying remarks around the half-liberated unitary quantum group.

\subsection*{Open Access and Data Statement} For the purpose of Open Access, the authors have applied a CC BY public copyright license to any Author Accepted Manuscript (AAM) version arising from this submission. Data sharing is not applicable to this article as no new data were created or analyzed in this work.

	\section{Selfless Inclusions}
	
	We work in the setting of C*-probability spaces $(A,\rho)$; i.e., 
    unital C*-algebras endowed with a distinguished state.
We always assume that the state induces a faithful GNS representation of the C*-algebra, with the exception of ultrapower states. Morphisms between C*-probability spaces are unital *-homomorphisms preserving the respective states.
We often omit reference to the state of a C*-probability space,  e.g., in the formation of reduced free products,
 if it can be inferred from the context. We note however that all free products below are reduced free products with respect to the given states. 

Given a C*-probability space $(A,\rho)$ and an ultrafilter $\omega$ (over some set), we obtain the ultrapower C*-probability space $(A^\omega,\rho^\omega)$, where $A^\omega$ is the C*-algebra ultrapower of $A$ and $\rho^\omega$ the limit state (along $\omega$) induced by $\rho$.

	By an inclusion of C*-probability spaces we understand a pair of C*-probability spaces  $(A,\rho)$ and $(B,\rho|_B)$, where $B$ is a unital C*-subalgebra of $A$ and the state on $B$ is the restriction of the state on $A$. To denote it, we write $B\subset A$ or $B\subset (A,\rho)$. By an embedding of $B_1\subset (A_1,\rho_1)$ in $B_2\subset (A_2,\rho_2)$ we understand a unital embedding of C*-algebras $\theta\colon A_1\to A_2$ that is state preserving, i.e. $\rho_2\theta=\rho_1$, and such that $\theta(B_1)\subseteq B_2$.

Let $\theta\colon (B_1\subset (A_1,\rho_1)) \to (B_2\subset (A_2,\rho_2))$ be an embedding of inclusions. We call $\theta$ existential if there exists an ultrafilter $\omega$ and an embedding 
	\[
	\sigma\colon (B_2\subset (A_2,\rho_2))\to (B_1^\omega\subset (A_1^\omega,\rho_1^\omega))
	\] such that
	$\sigma\circ\theta$ agrees with the diagonal inclusion of $A_1$ in $A_1^\omega$. Equivalently, there exists a C*-algebra
	embedding $\sigma\colon A_2\to A_1^\omega$ such that $\sigma$ is state preserving, $\sigma\circ \theta$ agrees with the diagonal inclusion, and
	 $\sigma(B_2)\subseteq B_1^\omega$. Note that in this case the
induced embeddings	$\theta\colon A_1\to A_2$ and $\theta|_{B_1}\colon B_1\to B_2$ are both existential embeddings of C*-probability spaces. 
See \cite[Section 1]{robertselfless} for a discussion of existential embeddings of C*-probability spaces.

The following lemma is elementary.

	\begin{lemma}\label{lem:easyexistential}
		Let $\theta_1$ and $\theta_2$ be embeddings between inclusions of C*-probability spaces such that $\theta_2\circ\theta_1$ is well-defined.
	\begin{enumerate}[(i)]
		\item 	If $\theta_1$ and $\theta_2$ are existential, then $\theta_2\circ\theta_1$ is also existential. 
		\item
		If $\theta_2\circ\theta_1$ is existential, then so is $\theta_1$.
		\end{enumerate}
	\end{lemma}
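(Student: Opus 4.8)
The plan is to dispatch (ii) first, since it is immediate, and then to treat (i), where the only genuine content is the amalgamation of an iterated ultrapower into a single ultrapower. Throughout I write $\theta_1\colon (B_1\subset A_1)\to(B_2\subset A_2)$ and $\theta_2\colon (B_2\subset A_2)\to(B_3\subset A_3)$, and I let $d_{A,\omega}\colon A\to A^\omega$ denote the diagonal embedding associated to an ultrafilter $\omega$.

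For (ii), suppose $\theta_2\circ\theta_1$ is existential, witnessed by an ultrafilter $\omega$ and a state-preserving embedding $\sigma\colon A_3\to A_1^\omega$ with $\sigma(B_3)\subseteq B_1^\omega$ and $\sigma\circ\theta_2\circ\theta_1=d_{A_1,\omega}$. I claim $\tau:=\sigma\circ\theta_2\colon A_2\to A_1^\omega$ witnesses existentiality of $\theta_1$ with the same ultrafilter. Indeed, $\tau$ is a composition of state-preserving embeddings, hence a state-preserving embedding; since $\theta_2(B_2)\subseteq B_3$ and $\sigma(B_3)\subseteq B_1^\omega$ we get $\tau(B_2)\subseteq B_1^\omega$; and $\tau\circ\theta_1=\sigma\circ\theta_2\circ\theta_1=d_{A_1,\omega}$. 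That is the whole of (ii).

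For (i), suppose $\theta_1,\theta_2$ are both existential, witnessed by ultrafilters $\omega_1,\omega_2$ and state-preserving embeddings $\sigma_1\colon A_2\to A_1^{\omega_1}$, $\sigma_2\colon A_3\to A_2^{\omega_2}$ with $\sigma_1(B_2)\subseteq B_1^{\omega_1}$, $\sigma_2(B_3)\subseteq B_2^{\omega_2}$, $\sigma_1\circ\theta_1=d_{A_1,\omega_1}$ and $\sigma_2\circ\theta_2=d_{A_2,\omega_2}$. First I would apply $\sigma_1$ entrywise to obtain the induced state-preserving embedding $\sigma_1^{\omega_2}\colon A_2^{\omega_2}\to (A_1^{\omega_1})^{\omega_2}$, which carries $B_2^{\omega_2}$ into $(B_1^{\omega_1})^{\omega_2}$ and commutes with the diagonals, i.e. $\sigma_1^{\omega_2}\circ d_{A_2,\omega_2}=d_{A_1^{\omega_1},\omega_2}\circ\sigma_1$. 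The key structural input is then an ultrafilter $\omega$ together with a state-preserving embedding $\iota\colon (A_1^{\omega_1})^{\omega_2}\to A_1^{\omega}$, obtained by taking $\omega=\omega_1\otimes\omega_2$ on the index product and sending a doubly-indexed bounded representing family $(a_{j,i})_{(i,j)}$ to its class in $A_1^{\omega}$; the Fubini property $\lim_{(i,j)\to\omega}=\lim_{j\to\omega_2}\lim_{i\to\omega_1}$ shows this is a well-defined isometric $*$-homomorphism preserving the state, carrying $(B_1^{\omega_1})^{\omega_2}$ into $B_1^\omega$ and satisfying $\iota\circ d_{A_1^{\omega_1},\omega_2}\circ d_{A_1,\omega_1}=d_{A_1,\omega}$. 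Granting this, I set $\sigma:=\iota\circ\sigma_1^{\omega_2}\circ\sigma_2\colon A_3\to A_1^{\omega}$; it is a state-preserving embedding, it sends $B_3$ into $B_1^\omega$ by tracing $\sigma_2(B_3)\subseteq B_2^{\omega_2}$, $\sigma_1^{\omega_2}(B_2^{\omega_2})\subseteq (B_1^{\omega_1})^{\omega_2}$, $\iota((B_1^{\omega_1})^{\omega_2})\subseteq B_1^\omega$, and a short diagram chase gives $\sigma\circ\theta_2\circ\theta_1=\iota\circ\sigma_1^{\omega_2}\circ d_{A_2,\omega_2}\circ\theta_1=\iota\circ d_{A_1^{\omega_1},\omega_2}\circ\sigma_1\circ\theta_1=\iota\circ d_{A_1^{\omega_1},\omega_2}\circ d_{A_1,\omega_1}=d_{A_1,\omega}$, so $\theta_2\circ\theta_1$ is existential.

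The one step requiring care — which I regard as the main obstacle — is the construction of $\iota$: checking that a doubly-indexed family over $I\times J$ well-defines an element of $A_1^{\omega_1\otimes\omega_2}$ independently of the choice of representatives (using that each $b_j$ has a representative of sup-norm equal to $\|b_j\|$, so the double family is bounded), that the resulting map is isometric and state-preserving via the Fubini identity, and that it is simultaneously compatible with the passage to $B_1$ and with the diagonals. All of this is standard in the continuous model theory of C*-probability spaces, but it is where the genuine (if routine) verification lies; everything else is formal bookkeeping.
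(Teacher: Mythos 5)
Your proof is correct. The paper offers no argument at all for this lemma (it is simply declared elementary), and what you wrote is the standard verification the authors evidently have in mind: part (ii) by composing the witness $\sigma$ with $\theta_2$, and part (i) by pushing the second witness through the entrywise ultrapower of the first and then collapsing the iterated ultrapower $(A_1^{\omega_1})^{\omega_2}$ into a single ultrapower $A_1^{\omega_2\otimes\omega_1}$ via the Fubini property of the product ultrafilter, checking compatibility with states, diagonals, and the distinguished subalgebras along the way.
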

	
We also have the following corollary of ``free exactness" (cf. \cite[Corollary~1.9]{robertselfless}).

\begin{theorem}\label{thm: free-prod-exstl-embed}
A reduced free product of existential embeddings of inclusions  is again existential. That is, given inclusions $B_i\subset (A_i,\rho_i)$, 
$D_i\subset (C_i,\psi_i)$ for $i=1,2$, and existential embeddings
\[
\theta_i\colon (B_i\subset (A_i,\rho_i))\to (D_i\subset (C_i,\psi_i))
\]
for $i=1,2$, the induced map
\[
\theta_1 * \theta_2 \colon (B_1*B_2 \subset A_1*A_2) \to (D_1*D_2 \subset C_1*C_2)
\]
is existential.
\end{theorem}

	\begin{definition}\label{def:ssinclusion}
		We call an inclusion of C*-probability spaces $B\subset  (A,\rho)$ 
		selfless if there exist a free ultrafilter $\omega$ and a C*-probability space $(C,\kappa)$, with $C\neq\C$, such that the first factor embedding of inclusions
		\[
		\theta\colon (B\subset A)\to (B*C\subset A*C)
		\] is existential.
		More concretely, $B\subset  A$ is selfless if there exist an ultrafilter $\omega$, a C*-probability space $(C,\kappa)$ with 	
		$C\neq\C$, and a state-preserving embedding of C*-algebras 
		$\sigma\colon A*C\to A^\omega$, such that $\sigma\circ\theta$ agrees with the diagonal inclusion  and 
        $\sigma(B*C)\subseteq B^\omega$ (equivalently, $\sigma(C)\subseteq B^\omega$).
	\end{definition}

Notice that a C*-probability space $(A,\rho)$ is selfless in the sense of \cite{robertselfless} if and only if $A\subset  (A,\rho)$ is a selfless inclusion, and that if $B\subset (A,\rho)$ is a selfless inclusion, then both $(A,\rho)$ and $(B,\rho|_B)$ are selfless C*-probability spaces.

\begin{lemma}
	Let $\phi \colon (B_1\subset A_1)\to (B_2\subset A_2)$ be an existential embedding of inclusions and suppose that $B_2\subset A_2$
	is selfless. Then so is $B_1\subset A_1$.
\end{lemma}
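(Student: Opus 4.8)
The plan is to transport the selflessness witness for $B_2\subset A_2$ back along $\phi$, reusing the \emph{same} auxiliary C*-probability space $C$ and exploiting that free products of existential embeddings remain existential. First I would unpack the hypothesis: selflessness of $B_2\subset A_2$ provides a free ultrafilter $\omega$ and a C*-probability space $(C,\kappa)$ with $C\neq\C$ such that the first-factor embedding $\theta_2\colon (B_2\subset A_2)\to (B_2*C\subset A_2*C)$ is existential. The goal is to produce the analogous witness for $B_1\subset A_1$, i.e. to show that the first-factor embedding $\theta_1\colon (B_1\subset A_1)\to (B_1*C\subset A_1*C)$, using the very same $C$, is existential; this is exactly the definition of $B_1\subset A_1$ being selfless.

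Next I would form the free product embedding $\phi*\mathrm{id}_C\colon (B_1*C\subset A_1*C)\to (B_2*C\subset A_2*C)$. Since $\phi$ is existential by hypothesis and the identity map $\mathrm{id}_C\colon (C\subset C)\to (C\subset C)$ is trivially existential (the diagonal embedding $C\to C^\omega$ serves as the witnessing $\sigma$, and the side condition $\sigma(C)\subseteq C^\omega$ holds vacuously), Theorem \ref{thm: free-prod-exstl-embed} guarantees that $\phi*\mathrm{id}_C$ is existential.

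The key observation is that the resulting square commutes: $(\phi*\mathrm{id}_C)\circ\theta_1=\theta_2\circ\phi$, both maps restricting on $A_1$ to $a\mapsto \phi(a)$ viewed inside the first factor of $A_2*C$. Now $\theta_2\circ\phi$ is a composition of existential embeddings — $\theta_2$ from the selflessness of $B_2\subset A_2$, and $\phi$ by hypothesis — hence existential by part (i) of the composition lemma stated above. Consequently $(\phi*\mathrm{id}_C)\circ\theta_1$ is existential, and part (ii) of that lemma (if a composition is existential, so is its inner factor) forces $\theta_1$ to be existential. As $C\neq\C$, this exhibits $B_1\subset A_1$ as selfless.

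I expect no serious obstacle here; the entire content lies in assembling the commutative square and invoking the free-product stability of existential embeddings, so the only points requiring care are verifying that $\mathrm{id}_C$ genuinely qualifies as an existential embedding of the inclusion $C\subset C$ and that the square $(\phi*\mathrm{id}_C)\circ\theta_1=\theta_2\circ\phi$ commutes on the nose (which it does, since both realize $\phi$ followed by the canonical inclusion into the free product).
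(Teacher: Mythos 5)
Your proof is correct and takes essentially the same route as the paper: compose $\theta_2\circ\phi$ (existential by part (i) of the composition lemma), observe that it factors through the first-factor embedding $\theta_1\colon (B_1\subset A_1)\to (B_1*C\subset A_1*C)$ via $\phi*\mathrm{id}_C$, and conclude by part (ii) that $\theta_1$ is existential — the paper packages this factorization as a ``corestriction'' of $\theta_2\circ\phi$ to $B_1*C\subset A_1*C$. One small remark: your appeal to Theorem \ref{thm: free-prod-exstl-embed} to make $\phi*\mathrm{id}_C$ existential is never used in your logical chain, since part (ii) only needs $\phi*\mathrm{id}_C$ to exist as a state-preserving embedding of inclusions (standard functoriality of reduced free products), not to be existential.
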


\begin{proof}
By assumption, there exist $C\neq \C$ such that the first factor embedding  $\theta\colon (B_2\subset A_2)\to (B_2*C\subset A_2*C)$ is existential. Thus, the embedding 
\[
\theta\circ \phi\colon (B_1\subset A_1)\to (B_2*C\subset A_2*C)
\] 
is existential (Lemma \ref{lem:easyexistential} (i)). Since $\theta\circ\phi$ ranges in $B_1*C\subset A_1*C$, its corestriction to  $B_1*C\subset A_1*C$ is also existential (Lemma \ref{lem:easyexistential} (ii)). 
This shows that $B_1\subset A_1$ is selfless.
\end{proof}

	In the following theorem we sometimes omit reference to the state, but the reader should bear in mind that we work in
	the category of C*-probability spaces and state preserving unital *-homomorphisms.

	\begin{theorem}\label{thm:ssinclusions}
		Let $B\subset  (A,\rho)$ be a selfless inclusion of C*-probability spaces. The following are true:
		\begin{enumerate}[(i)]  
			\item
			If $B\subset C\subset A$, then $B\subset C$ and $C\subset A$ are selfless. In particular, $C$ is selfless. 
			
			\item  Given an inclusion of C*-probability spaces 
			$D_1\subset D_2$, the reduced free product inclusion $B*D_1\subset A*D_2$ is selfless. 
			

			\item The  C*-probability space $(C,\kappa)$ witnessing selflessness of $B\subset  (A,\rho)$
			in Definition \ref{def:ssinclusion} may be chosen to be $(C_r^*(\mathbb F_\infty),\tau)$.

			
			\item Choose any $2/\sqrt{5}<\alpha<1$. Then for every $a\in A$ there exist unitaries $u_1,\ldots,u_5\in B$ such that 
            \begin{equation}\label{5Dixmier}
            \Big\|\frac15\sum_{i=1}^5 u_iau_i^* - \rho(a)1\Big\|\leq \alpha\|a\|.
            \end{equation}
            Consequently, the inclusion $B\subset A$ has the relative Dixmier property with respect  to $\rho$; i.e.,  
			$\rho(a)1\in \overline{\operatorname{co}}\{u a u^*:u\in U(B)\}$ for all $a\in A$. 	

            \item Assume that $\rho$ is faithful.
            Then  $\rho$ can be $B$-excised in the following sense: there exists a net $(b_\lambda)_\lambda$ of positive elements of norm 1
            in $B$  such that 
            $\|b_{\lambda}^{\frac12}ab_{\lambda}^{\frac12}-\rho(a)b_\lambda\|\to 0$ and $\rho(ab_\lambda a^*)\to 0$ for all $a\in A$. 
		\end{enumerate}
	\end{theorem}
	
	\begin{proof}
		(i) This is straightforward.
		
		(ii) Since $B\subset A$ is selfless, the first factor embedding
		$(B\subset A)\to (B*C\subset A*C)$ is existential for some nontrivial $C$.  Since the reduced free product of 
		existential embeddings (of inclusions) is again existential (Theorem \ref{thm: free-prod-exstl-embed}), and the identity embedding $(D_1\subset D_2)\to (D_1\subset D_2)$ is trivially existential,  
	the embedding of inclusions	
	\[
	(B*D_1\subset A*D_2)\to (B*C*D_1\subset A*C*D_2)
	\] 
	is existential. It follows that $B*D_1\subset A*D_2$ is a selfless inclusion.

		
		(iii) Let $C\neq \C$ be such that the first factor embedding $(B\subset A) \to (B*C\subset A*C)$ is existential.
		Taking reduced free product with the identity $(C\subset C)\to (C\subset C)$, we get that 
        \[(B*C\subset A*C) \to (B*C*C\subset A*C*C)\]
		is existential, and so 
        \[(B\subset A)\to  (B*C*C\subset A*C*C)\]
        is existential. Applying this argument 
		repeatedly, we get that 
        \[
        (B\subset A)\to  (B*C^{*n}\subset A*C^{*n})
        \]
        is existential. For large enough $n$,
		$C^{*n}$ contains an embedding of $C_r^*(\mathbb F_\infty)$ 
        (by \cite[Lemma 2.5]{robertselfless}). Thus,
        \[
        (B\subset A)\to (B*C_r^*(\mathbb F_\infty)\subset A*C_r^*(\mathbb F_\infty))
        \]
        is existential, as desired.
		
		
		(iv) Avitzour's proof of simplicity of the reduced free product  $A*C_r^*(\mathbb F_\infty)$ finds for every $a\in A\subseteq A*C_r^*(\mathbb F_\infty)$ unitaries $u_1,u_2,\ldots,u_5\in C_r^*(\mathbb F_\infty)$ such that \eqref{5Dixmier} holds with $\alpha=\frac{2}{\sqrt 5}$ (see \cite[Proposition 3.1]{Avitzour}).
         Applying $\sigma\colon A*C_r^*(\mathbb F_\infty)\to A^\omega$, and using that	the image of $C_r^*(\mathbb F_\infty)$ is contained in $B^\omega$, we get \eqref{5Dixmier} with $\alpha=\frac2{\sqrt 5}$ and unitaries in $B^\omega$. Lifting these unitaries to sequences of unitaries in $B$, we obtain  \eqref{5Dixmier} with any  $\frac2{\sqrt{5}}<\alpha<1$ and unitaries in $B$.

        (v) It will suffice to show that for a set $\mathcal S$ whose linear span  is  dense in $A$, and for every finite set $F\subseteq \mathcal S$ and $\epsilon>0$, there exists $b\in B_+$ of norm 1 such that $\|b^{\frac12}ab^{\frac12}-\rho(a)b\|<\epsilon$ for all $a\in F$. We will show this with 
        \[
        \mathcal S=\{1\}\cup \{a\in A_+:\|a\|=1\hbox{ and $ad=0$ for some nonzero $d\in A_+$}\}.
        \]

        Given $a\in A_+$, define $d_\rho(a)=\lim_n \rho(a^{\frac 1n})$.
        Note that if $ad=0$ for some  nonzero $d\in A_+$, then $d_{\rho}(a)+d_{\rho}(d)=d_\rho(a+d)\leq 1$. On the other hand, since $\rho$ is faithful, $d_\rho(d)>0 $. It follows that if $a\in \mathcal S$, then $d_\rho(a)<1$.
 
        Let $\epsilon>0$ and $F\subseteq \mathcal S$. 
        Choose $\delta>0$  such that $d_\rho(a)<1-\delta$
        for all $a\in F$. Choose a $c\in C_r^*(\mathbb F_\infty)_+$ of norm 1 such that $d_\tau(c)<\min(\delta, \epsilon^2)$, which
        exists since $C_r^*(\mathbb F_\infty)$ is simple and nonelementary. Then,
        in $A*C_r^*(\mathbb F_\infty)$ we have
        \[
        \|c^{\frac12}(a-\rho(a))c^{\frac12}\| <\epsilon
        \]
        for all $a\in \{1\}\cup F$, by \cite[Lemma 8.1]{robertselfless}. Also, by the free independence of $A$ and $c$, 
        we have
        \[
        (\rho*\tau)(aca)=\rho(a^2)\tau(c)<\epsilon^2,
        \]
        for all $a\in \{1\}\cup F$.
        Let $\sigma\colon A*C_r^*(\mathbb F_\infty)\to A^\omega$ be such that
        $\sigma\circ\theta$ is the diagonal inclusion of $A$ in $A^\omega$ and $\sigma(C_r^*(\mathbb F_\infty))\subseteq B^\omega$.
        Set  $b=\sigma(c)\in B^\omega$, and lift  $b$ to $(b_i)_i$, where each $b_i$ is  a positive element of norm $1$.
        Then $\|b_i^{\frac12}(a-\rho(a))b_i^{\frac12}\|<\epsilon$ and $\rho(ab_ia)<\epsilon^2$
        for all $a\in \{1\}\cup F$ and $\omega$-almost all $i$.
	\end{proof}

	An inclusion $B\subset (A,\rho)$ is said to have relative excision, in the sense of R{\o}rdam, 
	if $\rho$ can be excised (in $A$) with a net of positive contractions coming from $B$ \cite[Definition~3.14]{RordamIrred}. 
	(We also say in this case that $\rho$ is $B$-excised.) 
    R{\o}rdam showed that if $B$
	is simple, unital, and infinite dimensional, and $\Gamma$ a discrete group with an outer action on $B$, then $B\subset B\rtimes_r \Gamma$ has relative excision 
	with respect to every state that factors through the conditional expectation onto $B$ \cite[Lemma~5.7]{RordamIrred}. Theorem \ref{thm:ssinclusions} (v) implies that if the inclusion $B\subset (A,\rho)$ is selfless and 
   $\rho$ is faithful, then $B\subset (A,\rho)$ has relative excision.

We require the following lemma due to Ozawa.

\begin{lemma}\label{lem: univ-free-prod}(\cite[Lemma~12]{ozawa2025proximalityselflessnessgroupcalgebras}).
	Let $(A,\rho)$ be a $C^*$-probability space and $(\mathcal T,\omega)$ the Toeplitz probability space (the universal $C^*$-algebra generated by an isometry). Then $A * \mathcal T$ is the universal $C^*$-algebra generated by $A$ and an isometry $T$ (the generator of the Toeplitz algebra) that satisfies $T^*aT = \rho(a)1$ for $a\in A$. The free product state $\psi$ on $A*\mathcal T$ is the unique state that satisfies $\psi|_A = \rho $ and $\psi(aTT^*a^*)=0$ for all $a\in A$.
\end{lemma}

\begin{proof}
	See Example 4.6.11 and Exercise 4.8.1 in \cite{brown2008textrm}.
\end{proof}

	\begin{theorem}\label{thm: selfless-pi}
		Let $B\subset  (A,\rho)$ be an inclusion 
        such that $B$ is simple and purely infinite
        and $\rho$ can be $B$-excised by a net  of positive contractions $(b_\lambda)$ also satisfying that  
        $\rho(ab_\lambda a^*)\to 0$ for all $a\in A$. Then  $B\subset (A,\rho)$ is selfless.
	\end{theorem}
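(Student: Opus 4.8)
\section*{Proof proposal}

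The plan is to verify the definition of selflessness directly by constructing, inside the ultrapower $B^\omega$, a Haar unitary $u$ that is freely independent from the diagonally embedded copy of $A$ with respect to $\rho^\omega$. Granting this, the subalgebra $C^*(A,u)\subseteq A^\omega$ is a copy of the reduced free product $A*C(\T)$; the resulting embedding $\sigma\colon A*C(\T)\to A^\omega$ is state preserving and restricts to the diagonal inclusion on $A$, and because $u\in B^\omega$ we have $\sigma(C(\T))\subseteq B^\omega$. Thus the first factor embedding $\theta\colon(B\subset A)\to(B*C(\T)\subset A*C(\T))$ is existential and $B\subset A$ is selfless. The crucial feature is that the free generator must be \emph{sourced from $B$}; this is exactly where simplicity and pure infiniteness of $B$ enter, since the excising net $(b_\lambda)$ lives in $B$. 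In this sense the argument reverses Theorem \ref{thm:ssinclusions}(vii), which extracts an excising net from a free element, whereas here we build the free element out of the excising net.

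First I would repackage the hypotheses in the ultrapower. Setting $b=(b_\lambda)\in B^\omega$, the excision estimate becomes the exact identity $b^{1/2}ab^{1/2}=\rho(a)b$ for all $a\in A$, while $\rho(ab_\lambda a^*)\to 0$ becomes $\rho^\omega(aba^*)=0$ for all $a\in A$; equivalently $b^{1/2}$ annihilates $A$ in $L^2(\rho^\omega)$, and in particular $\rho^\omega(b)=0$. Since $B$ is simple and purely infinite, the element $(b_\lambda-\tfrac12)_+$ is nonzero (as $\|b_\lambda\|=1$) and the hereditary subalgebra it generates contains a projection $q_\lambda$ Murray--von Neumann equivalent to $1$, which moreover satisfies $q_\lambda b_\lambda q_\lambda\geq \tfrac12 q_\lambda$. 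Writing $w_\lambda$ for a partial isometry with $w_\lambda^*w_\lambda=1$ and $w_\lambda w_\lambda^*=q_\lambda$, and $h_\lambda=(q_\lambda b_\lambda q_\lambda)^{-1/2}$ (so that $\|h_\lambda\|\leq\sqrt2$ uniformly), the elements $v_\lambda:=b_\lambda^{1/2}h_\lambda w_\lambda$ assemble into an isometry $v\in B^\omega$ with
\[
v^*v=1,\qquad v^*av=\rho(a)1\quad(a\in A),\qquad \rho^\omega(vv^*)=0 .
\]
This \emph{collapsing isometry}, whose conjugation sends $A$ to its scalar part and whose range is $\rho^\omega$-negligible, is the engine of the construction.

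Using pure infiniteness once more to split the hereditary subalgebra of $(b_\lambda-\tfrac12)_+$ into infinitely many orthogonal copies of the unit, I would produce a sequence of such collapsing isometries with mutually orthogonal, $\rho^\omega$-null ranges (arranging orthogonality through the freedom in the choice of the $q_\lambda$ and $w_\lambda$), and assemble them into a bilateral-shift pattern yielding a unitary $u\in B^\omega$ with $\rho^\omega(u^n)=0$ for $n\neq 0$. The remaining and principal task is to show that $u$ is a Haar unitary \emph{free} from $A$: one must prove that $\rho^\omega$ kills every alternating centered word $a_1u^{n_1}a_2u^{n_2}\cdots a_ku^{n_k}$ with $\rho(a_i)=0$ and $n_i\neq 0$. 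Here the collapsing identity $v^*av=\rho(a)1$ is applied repeatedly to reduce interior $A$-syllables to scalars, while the $L^2$-annihilation $\rho^\omega(aba^*)=0$ forces the boundary contributions to vanish; the bookkeeping of orthogonal ranges then collapses the word to a scalar multiple of $\rho^\omega(u^{n_1+\cdots+n_k})=0$. Carrying out this moment computation cleanly is the main obstacle, and it is precisely the place where Ozawa's arguments from \cite{ozawa2025proximalityselflessnessgroupcalgebras} are adapted to the C*-probability setting. Once freeness is established, the reduced free product $A*C(\T)$ embeds into $A^\omega$ with its free generator lying in $B^\omega$, which completes the proof.
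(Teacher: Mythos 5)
Your first two paragraphs are essentially the paper's argument (which follows Ozawa): the paper uses real rank zero to replace $b_\lambda$ by projections $p_\lambda$ with $b_\lambda^{1/2}p_\lambda=p_\lambda$ and then pure infiniteness to find isometries $T_\lambda\in B$ with $T_\lambda T_\lambda^*\leq p_\lambda$, while you reach the same ``collapsing isometry'' $v\in B^\omega$ with $v^*av=\rho(a)1$ and $\rho^\omega(avv^*a^*)=0$ via the hereditary subalgebra of $(b_\lambda-\tfrac12)_+$; both routes are fine. The divergence, and the genuine gap, is in your final paragraph. At the point where you have the isometry $v$, the proof is essentially over: Definition \ref{def:ssinclusion} only asks for \emph{some} C*-probability space $C\neq\C$, and Ozawa's computation (cited in the paper) shows that $C^*(A,v)\cong A*\mathcal T$, the reduced free product with the Toeplitz algebra endowed with its vacuum state, whose copy of $\mathcal T$ lies in $B^\omega$. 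Instead, you insist on manufacturing a Haar unitary free from $A$ so as to take $C=C(\T)$, and this step is both unnecessary and not carried out: you yourself call the freeness moment computation ``the main obstacle,'' but that computation \emph{is} the mathematical content of the theorem, not bookkeeping.

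Moreover, the mechanism you sketch for it cannot work as described. First, the bilateral-shift assembly is impossible at the C*-level: a unitary built from isometries with mutually orthogonal ranges, each equivalent to $1$, would require those range projections to sum to $1$, and an infinite family of nonzero orthogonal projections never sums to $1$ in norm, in $B^\omega$ or anywhere else. Second, and more structurally, the collapsing identity is incompatible with unitarity: if $u$ were unitary and satisfied $u^*au=\rho(a)1$ for all $a\in A$, then $a=\rho(a)1$ for all $a$, i.e., $A=\C$. So conjugation by the unitary you are trying to build can never ``reduce interior $A$-syllables to scalars''; that phenomenon is available only for \emph{proper} isometries, whose defect projection $1-vv^*$ carries the vacuum. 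This is precisely why Ozawa and the paper take the witness $C$ to be the Toeplitz algebra with its (non-tracial) vacuum state rather than $C(\T)$. A Haar unitary in $B^\omega$ free from $A$ does exist in the end, but only a posteriori: once selflessness is established via $\mathcal T$, Theorem \ref{thm:ssinclusions}(iv) upgrades the witness to $C_r^*(\F_\infty)$. To repair your write-up, delete the third paragraph and conclude directly from the isometry $v$ that the first factor embedding $(B\subset A)\to(B*\mathcal T\subset A*\mathcal T)$ is existential.
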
	
	
	\begin{proof}
		We follow Ozawa's proof that simple purely infinite C*-algebras are selfless \cite[Theorem~3]{ozawa2025proximalityselflessnessgroupcalgebras}.

        Using that $B$ has real rank zero, we can assume that $b_\lambda$ has finite spectrum (and norm 1) for all $\lambda$. Then, with  $p_\lambda$ the projection $1_{\{1\}}(b_{\lambda})$ obtained by functional calculus, we have $b_{\lambda}^\frac 12 p_\lambda=p_\lambda$. We deduce that $(p_\lambda)$
        still excises $\rho$ and $\rho(ap_\lambda a^*)\to 0$ for all $a\in A$.

        Using now that $B$ is simple and  purely infinite, we choose isometries $(T_\lambda)$ in $B$ such that  $T_\lambda T_{\lambda}^*\leq p_\lambda$ for all $\lambda$. The net $(T_\lambda)$ still excises $\rho$ and satisfies that $\rho(aT_\lambda T_\lambda^* a^*)\to 0$ for all $a$. Choose a free ultrafilter $\omega$ over the index set for $\lambda$ containing the Fr{\'e}chet filter. Letting $T\in B^\omega$ be the image of $(T_\lambda)$, we obtain an isometry such that
        $T^*aT=\rho(a)1$ and $\rho(aTT^*a^*)=0$ for all $a\in A$.
        It follows by Lemma \ref{lem: univ-free-prod} that there exists a state preserving isomorphism $\sigma\colon A*\mathcal T\to C^*(A,T)$ that restricts to the identity on $A$ (with $\mathcal T$ denoting the Toeplitz algebra endowed with the vacuum state). Since $\sigma(\mathcal T)\subseteq B^\omega$, this shows that $B\subset (A,\rho)$ is a selfless inclusion. 
    	\end{proof}	

Note that the condition $\rho(ab_\lambda a^*)\to 0$ for all $a\in A$ in the theorem above is necessary to reach the conclusion when $\rho$ is faithful, by Theorem \ref{thm:ssinclusions} (v), and may always be necessary. On the other hand, if $\rho$ is a diffuse state, then any sequence of positive contractions excising it must converge weakly to 0 \cite[Corollary 2.15]{AAPdiffuse}. Thus, in this case, the condition $\rho(ab_\lambda a^*)\to 0$ for all $a\in A$ holds automatically.
    
	Let $\mathcal Z$ denote the Jiang--Su C*-algebra. In \cite{sarkowicz2025tensorially}, Sarkowicz  calls an inclusion $B\subset A$  
    $\mathcal Z$-stable if it is  isomorphic to the tensor inclusion $B\otimes \mathcal Z\subset A\otimes \mathcal Z$. 

  \begin{lemma}
  If $B\subset A$ is $\mathcal Z$-stable, then 
  the first factor embedding 
  \[
  \theta\colon (B\subset A)\to (B\otimes \mathcal Z\subset A\otimes \mathcal Z)
  \]
  is existential. That is, there exists an embedding $\sigma\colon A\otimes \mathcal Z\to A^\omega$, for some ultrafilter $\omega$, such that $\sigma\circ\theta$ agrees with the diagonal inclusion and $\sigma(B\otimes \mathcal Z)\subseteq B^\omega$.
  \end{lemma}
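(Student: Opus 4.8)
The plan is to realise the required $\sigma\colon A\otimes\mathcal Z\to A^\omega$ as the $*$-homomorphism induced by a commuting pair. Since $\mathcal Z$ is nuclear, a unital $*$-homomorphism out of $A\otimes\mathcal Z$ is the same datum as a pair of unital $*$-homomorphisms into $A^\omega$ with commuting ranges. I would take the first to be the diagonal embedding $\pi_A\colon A\to A^\omega$, so that automatically $\sigma\circ\theta=\pi_A$ is the diagonal inclusion, and then I must produce the second: a unital $*$-homomorphism $\pi_{\mathcal Z}\colon\mathcal Z\to A^\omega$ whose range lies in $B^\omega\cap A'$ (here $A'$ denotes the relative commutant of the diagonal copy of $A$) and which is independent of $A$ in the sense that $\rho^\omega(\pi_A(a)\pi_{\mathcal Z}(c))=\rho(a)\tau_{\mathcal Z}(c)$ for all $a\in A$ and $c\in\mathcal Z$, where $\tau_{\mathcal Z}$ is the unique trace of $\mathcal Z$. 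Granting such a $\pi_{\mathcal Z}$, commutativity of ranges gives $\sigma(a\otimes c)=\pi_A(a)\pi_{\mathcal Z}(c)$; then $\sigma\circ\theta$ is diagonal, $\sigma(B\otimes\mathcal Z)\subseteq B^\omega\cdot B^\omega\subseteq B^\omega$, and the independence identity says exactly that $\rho^\omega\circ\sigma=\rho\otimes\tau_{\mathcal Z}$. Injectivity of $\sigma$ is then automatic: its kernel is an ideal on which the faithful state $\rho\otimes\tau_{\mathcal Z}$ vanishes, forcing it to be zero.

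To build $\pi_{\mathcal Z}$ I would push all of the self-absorption work into the $\mathcal Z$-tensor leg; the advantage is that one never conjugates inside $A$ and hence never disturbs $\rho$, so no traciality of $\rho$ is needed. Fix the isomorphism of inclusions coming from $\mathcal Z$-stability, viewed as a state-preserving isomorphism $\Phi\colon(A,\rho)\to(A\otimes\mathcal Z,\rho\otimes\tau_{\mathcal Z})$ with $\Phi(B)=B\otimes\mathcal Z$. Inside $\mathcal Z$ itself, strong self-absorption furnishes unital $*$-homomorphisms $\mu_k\colon\mathcal Z\to\mathcal Z$ that are asymptotically central, $\|[\mu_k(c),d]\|\to 0$, trace-preserving, $\tau_{\mathcal Z}\circ\mu_k=\tau_{\mathcal Z}$, and trace-decoupling, $\tau_{\mathcal Z}(d\,\mu_k(c))\to\tau_{\mathcal Z}(d)\tau_{\mathcal Z}(c)$, for all $c,d\in\mathcal Z$. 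Setting $\phi_k(c):=\Phi^{-1}(1_A\otimes\mu_k(c))$ we get $\phi_k(\mathcal Z)\subseteq\Phi^{-1}(1_A\otimes\mathcal Z)\subseteq B$ (as $1_A\in B$), so $\pi_{\mathcal Z}:=[(\phi_k)_k]$ lands in $B^\omega$. Writing $\Phi(a)$ as a norm limit of finite sums $\sum_i a_i\otimes z_i$ and using that $\mu_k(c)$ is asymptotically central in the whole $\mathcal Z$-leg gives $\|[\phi_k(c),a]\|\to 0$, so $\pi_{\mathcal Z}(\mathcal Z)$ commutes with the diagonal $A$; the same expansion together with trace-decoupling gives $\rho(a\,\phi_k(c))=(\rho\otimes\tau_{\mathcal Z})(\Phi(a)(1\otimes\mu_k(c)))\to\rho(a)\tau_{\mathcal Z}(c)$, which is the independence identity, since $\Phi$ is state preserving.

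The main technical input, and the step I expect to be the real obstacle, is the existence of the $\mu_k$ with both asymptotic centrality and trace-decoupling. Asymptotic centrality is exactly the existence of a unital embedding $\mathcal Z\hookrightarrow\mathcal Z^\omega\cap\mathcal Z'$, which holds because $\mathcal Z$ is strongly self-absorbing. For trace-decoupling I would pass to the tracial GNS closure $N=\pi_{\tau_{\mathcal Z}}(\mathcal Z)''$, which is the hyperfinite $\mathrm{II}_1$ factor: for any central $y\in N'\cap N^\omega$ the functional $x\mapsto\tau^\omega(xy)$ is a bounded trace on the factor $N$, hence a scalar multiple of $\tau_{\mathcal Z}$, and evaluating at $x=1$ yields $\tau^\omega(xy)=\tau_{\mathcal Z}(x)\tau^\omega(y)$. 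Applying this to $y=[(\mu_k(c))_k]$, whose trace is $\tau_{\mathcal Z}(c)$, gives $\lim_{k\to\omega}\tau_{\mathcal Z}(d\,\mu_k(c))=\tau_{\mathcal Z}(d)\tau_{\mathcal Z}(c)$. Thus factoriality of the hyperfinite $\mathrm{II}_1$ factor is precisely what upgrades asymptotically central copies of $\mathcal Z$ to trace-independent ones, and transporting through the state-preserving isomorphism $\Phi$ carries this independence over to $(A,\rho)$ for an arbitrary state, completing the verification that $\theta$ is existential.
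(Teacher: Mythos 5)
Your commuting-pair skeleton (diagonal $\pi_A$ plus a map $\pi_{\mathcal Z}\colon\mathcal Z\to B^\omega\cap A'$ obtained by transporting asymptotically central unital endomorphisms $\mu_k$ of $\mathcal Z$ through the $\mathcal Z$-stability isomorphism, then using nuclearity of $\mathcal Z$) is a genuinely different route from the paper's, and most of it is sound and, importantly, state-free. But your injectivity step contains a genuine gap: you assume the isomorphism of inclusions can be chosen as a \emph{state-preserving} isomorphism $\Phi\colon(A,\rho)\to(A\otimes\mathcal Z,\rho\otimes\tau_{\mathcal Z})$. Sarkowicz's notion of a $\mathcal Z$-stable inclusion, which is the lemma's only hypothesis, is purely C*-algebraic: it provides an isomorphism $A\cong A\otimes\mathcal Z$ carrying $B$ onto $B\otimes\mathcal Z$, with no compatibility whatsoever between $\rho$ and $\rho\otimes\tau_{\mathcal Z}$. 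Consequently your independence identity $\rho^\omega(\pi_A(a)\pi_{\mathcal Z}(c))=\rho(a)\tau_{\mathcal Z}(c)$, which you obtain by pushing trace-decoupling through $\Phi$, is unjustified, and it is exactly what your kernel argument rests on. This is not a cosmetic issue: the lemma deliberately claims nothing about states, and in the proof of Theorem \ref{Z-stable selfless} the paper must \emph{re-derive} state preservation of the resulting map using the relative Dixmier property and $U(B)$-invariance of $\rho$; if your identity followed from $\mathcal Z$-stability alone, those hypotheses would be redundant. A smaller gap sits on top: even granting the identity, faithfulness of $\rho\otimes\tau_{\mathcal Z}$ requires $\rho$ to be faithful, whereas the standing assumption in the paper is only that $\rho$ has faithful GNS representation.

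The repair is easy, because injectivity is the only place you used states. Since $\mathcal Z$ is simple and nuclear, every closed two-sided ideal of $A\otimes\mathcal Z$ has the form $I\otimes\mathcal Z$ for a closed ideal $I\subseteq A$; as $\sigma(a\otimes 1)=\pi_A(a)$ is the diagonal inclusion, $\ker\sigma\cap(A\otimes 1)=0$, which forces $I=0$ and hence $\ker\sigma=0$. With this substitution (and dropping the trace-decoupling discussion entirely, since the lemma does not assert state preservation), your argument proves exactly the statement: $\sigma\circ\theta$ diagonal, $\sigma(B\otimes\mathcal Z)\subseteq B^\omega$, and $\sigma$ injective. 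For comparison, the paper sidesteps the issue altogether: it sets $D=\mathcal Z^{\otimes\infty}$, uses ``permutation'' isomorphisms $(A\otimes D)\otimes\mathcal Z\to A\otimes D$ that fix $A\otimes D_n\otimes 1$, and defines $\sigma$ as the $\omega$-limit of these maps, so that $\sigma$ is automatically isometric and the copy of $1\otimes 1\otimes\mathcal Z$ visibly lands in $(1\otimes D)^\omega\subseteq(B\otimes D)^\omega$; your approach trades that tensor-shift bookkeeping for the ideal-structure fact above.
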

  \begin{proof}
This is well known for a single $\mathcal Z$-stable C*-algebra. We now keep track of a C*-subalgebra.    
    Set $D=\mathcal Z^{\infty\otimes}$ and $D_n=\mathcal Z^{n\otimes}\otimes 1\otimes \cdots\subseteq D$. Choose ``permutation" isomorphisms $\sigma_n\colon D\otimes \mathcal Z\to D$ for $n=1,2,\ldots$ that are the identity on $D_n\otimes 1$. 
         Now tensor with $A$ to get isomorphisms $\mathrm{id}_A\otimes \sigma_n$ from $(A\otimes D)\otimes \mathcal Z$ to $A\otimes D$. Choose a free ultrafilter $\omega$ on $\N$ and use $(\sigma_n)_n$ to
    define $\sigma\colon (A\otimes D)\otimes \mathcal Z\to (A\otimes D)^\omega$ such that $\sigma\circ\theta$ is the diagonal inclusion.  
    We have shown that $\theta\colon A\otimes D\to (A\otimes D)\otimes \mathcal Z$ is existential. Observe now that the image of $(1\otimes 1_D)\otimes \mathcal Z$ under $\sigma$  is contained in $(1\otimes D)^\omega$. It follows that for any $1\in B\subseteq A$, the first factor embedding  
    \[
    (B\otimes D\subset A\otimes D)\to ((B\otimes D)\otimes \mathcal Z\subset (A\otimes D)\otimes \mathcal Z)
    \]
    is existential. But $D\cong \mathcal Z$, and  $B\subset A$ is isomorphic to $B\otimes \mathcal Z\subset A\otimes \mathcal Z$ by assumption. The lemma thus follows.
\end{proof}
    
	\begin{theorem}\label{Z-stable selfless}
		Let $B\subset (A,\rho)$ be a $\mathcal Z$-stable inclusion of C*-probability spaces 
that has the relative Dixmier property, where $A$ is exact and  
        $\rho$ is $U(B)$-invariant. Then $B\subset (A,\rho)$ is a selfless inclusion.	
	\end{theorem}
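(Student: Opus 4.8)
The plan is to produce, inside the C*-ultrapower $A^\omega$, a Haar unitary $U\in B^\omega$ that is \emph{freely} independent from the diagonal copy of $A$ with respect to $\rho^\omega$. Once this is achieved, exactness of $A$ lets me identify $C^*(A,U)$ with the reduced free product $A*C(\T)$, and the inclusion $C^*(U)\subseteq B^\omega$ then says exactly that the first factor embedding $\theta\colon(B\subset A)\to(B*C(\T)\subset A*C(\T))$ is existential; selflessness follows from Definition \ref{def:ssinclusion} with the witness $C=C(\T)\neq\C$.

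First I would harvest raw material from the preceding lemma, which supplies a state-preserving embedding $\sigma\colon A\otimes\mathcal Z\to A^\omega$ that restricts to the diagonal inclusion on $A\otimes 1$ and sends $1\otimes\mathcal Z$ into $B^\omega$. Writing $\mathcal Z\cong\mathcal Z^{\otimes\infty}$, the image $\sigma(1\otimes\mathcal Z)\subseteq B^\omega$ contains a sequence of mutually tensor-independent Haar unitaries $u_1,u_2,\dots$, each commuting with the diagonal $A$; since the restriction of any state to the central copy of $\mathcal Z$ is its unique trace $\tau$ and central sequences factorize asymptotically against $\rho^\omega$, these $u_n$ are moreover tensor-independent from $A$. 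Thus I obtain an abundant supply of Haar unitaries in $B^\omega$ that are tensor-independent from $A$, all of them drawn from the subalgebra $B^\omega$, as required for the relative statement.

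The hard part will be that tensor-independence is not free independence: a central Haar unitary $u$ satisfies $\rho^\omega(u^{k_1}a_1u^{k_2}a_2\cdots)=\tau(u^{\sum_i k_i})\,\rho^\omega(a_1a_2\cdots)$, which violates the defining freeness relations precisely when $\sum_i k_i=0$. This is where the relative Dixmier property enters, through the following structural match: in $A*C(\T)$ the free Haar unitary $z$ satisfies $\mathrm E(z^{k}az^{-k})=\rho(a)1$ for $k\neq 0$, i.e.\ its conjugation powers average centered elements to their $\rho$-value, and this is exactly the phenomenon that the relative Dixmier property furnishes through unitaries $v_j\in U(B)$ with $\sum_j\lambda_j v_j a v_j^*\approx\rho(a)1$ — a comparison rendered consistent by the assumed $U(B)$-invariance of $\rho$. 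Adapting Ozawa's method, I would interleave the independent central unitaries $u_n$ with independently chosen Dixmier-averaging conjugations from $U(B)$ so that, along $\omega$, every alternating centered word in $A$ and in $C^*(U)$ has vanishing $\rho^\omega$-moment: the independent $u_n$ guarantee that distinct occurrences land in independent tensor slots so the naive cancellations cannot occur, while the Dixmier averaging kills the residual $A$-correlations that survive when the exponents close up. I expect this moment computation — upgrading the qualitative Dixmier averaging to a quantitative, uniformly controlled asymptotic freeness — to be the main obstacle.

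Finally, with $U\in B^\omega$ a Haar unitary freely independent from $A$, I would invoke exactness of $A$ together with nuclearity of $C(\T)$: the canonical $*$-homomorphism $A*C(\T)\to C^*(A,U)\subseteq A^\omega$ determined by the diagonal inclusion of $A$ and by $z\mapsto U$ is state-preserving by the moment computation of the previous step, hence isometric because the reduced free product state is faithful in this setting. This produces a state-preserving embedding $\sigma'\colon A*C(\T)\to A^\omega$ with $\sigma'\circ\theta$ equal to the diagonal inclusion and $\sigma'(C(\T))=C^*(U)\subseteq B^\omega$, which is precisely the existentiality of $\theta$. By Definition \ref{def:ssinclusion}, the inclusion $B\subset(A,\rho)$ is selfless.
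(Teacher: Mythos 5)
Your skeleton matches the paper's (it also follows Ozawa), but the two steps you defer are exactly where the theorem lives, and your sketched route for them would not close. On the freeness upgrade: "interleaving" central Haar unitaries with Dixmier conjugations is not a construction, and the paper does something genuinely different. Following Ozawa, it first embeds the free semicircular system $\mathcal C$ into $\mathcal Z^\omega$ (a nontrivial theorem of Ozawa), then uses exactness of $A$ to pass from $A\otimes \mathcal C\hookrightarrow A\otimes \mathcal Z^\omega$ to an embedding into $(A\otimes\mathcal Z)^\omega$ --- note that exactness is spent \emph{here}, to commute the minimal tensor product past the ultrapower, not at the end as you propose. The relative Dixmier property is then used twice: once, with $U(B)$-invariance of $\rho$, to prove that the resulting $\sigma\colon A\otimes\mathcal C\to A^\omega$ is state-preserving (your parenthetical claim that "central sequences factorize asymptotically against $\rho^\omega$" is precisely this non-automatic point; also $\mathcal Z$ has many non-tracial states, so restriction to the central copy being the trace is not free); and once to produce unitaries $u_{i,k}\in U(B)$ whose averaging mixes, which must moreover satisfy the twisted condition $\frac{1}{N_i}\sum_k u_{i,k}au_{i,k}\to 0$ (no adjoints), obtained by replacing $u_{i,k}$ with $u_{i,k}\otimes t_k$, $t_k$ free group generators, and applying Voiculescu's inequality. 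The free element is then the explicit $s=\lim_i \frac12(T_i+T_i^*)$ with $T_i=\frac{1}{\sqrt{N_i}}\sum_k (u_{i,k}+u_{i,k}^*)\otimes l_k$, where $l_k$ are Cuntz isometries in $\mathcal O_\infty\supseteq\mathcal C$; the relations $l_j^*l_k=\delta_{jk}$ together with the mixing conditions are what drive the computation.

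The second issue is an outright error, not just a gap. You claim that once $U\in B^\omega$ is Haar and free from $A$ \emph{in distribution}, the canonical map $A*C(\T)\to C^*(A,U)$ is "isometric because the reduced free product state is faithful." The free product state $\rho*\lambda$ need not be faithful (only its GNS representation is), and the logic runs the wrong way: moment-matching gives a state-preserving $*$-homomorphism $\pi$ from the \emph{full} free product onto $C^*(A,U)$, and faithfulness of the GNS representation of $\rho*\lambda$ yields $\ker\pi\subseteq\ker q$ (where $q$ is the quotient onto the reduced free product), hence a surjection $C^*(A,U)\twoheadrightarrow A*C(\T)$ --- not the embedding of $A*C(\T)$ into $A^\omega$ that existentiality requires. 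For the reverse kernel inclusion you would need faithfulness of the GNS representation of $\rho^\omega$ restricted to $C^*(A,U)$, and the state on a norm ultrapower is badly non-faithful. This is exactly why freeness in moments is insufficient and why Ozawa's Cuntz-isometry construction, which controls operator norms and not merely moments (and in the paper yields directly that $C^*(A,s)\cong A*\mathcal C_1$), is what the proof actually rests on.
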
	
	
	\begin{proof}
		We follow Ozawa's proof \cite[Theorem~3]{ozawa2025proximalityselflessnessgroupcalgebras} for the case $B=A$. 
        
        Let $(\mathcal C,\tau )$ denote the free semicircular
system with countably many generators. 
By the previous lemma, there exists 
$\sigma_1\colon A\otimes \mathcal Z\to A^\omega$ 
that is the diagonal inclusion on $A\otimes 1$ and maps
$B\otimes \mathcal Z$ to $B^\omega$. (Note: from the proof of the lemma, $\omega$ can be any free ultrafilter over $\N$.)  By \cite[Theorem~4.1]{ozawa2023amenability}, 
$\mathcal C$ embeds in $\mathcal Z^\omega$. Tensoring with $A$ and using its exactness (see e.g. \cite[the proof of Proposition 3.11(ii)]{Hayes3}, \cite[Lemma in the appendix]{Male}, \cite[Lemma 2.18]{vHCDM}), we obtain an embedding of $A\otimes \mathcal C$ in
    $(A\otimes \mathcal Z)^\omega$. Composing it with $\sigma_1^\omega$,  we get $\sigma\colon A\otimes \mathcal C\to (A^\omega)^\omega\cong A^{\omega\otimes\omega}$ such that $A\ni a\mapsto \sigma(a\otimes 1)$
	agrees with the diagonal inclusion and  $1\otimes \mathcal C$ is mapped into $B^{\omega\otimes\omega}$. Relabel $\omega\otimes\omega$ as $\omega$ (as it will be irrelevant which specific ultrafilter we are working with), so as to now have $\sigma\colon A\otimes\mathcal C\to A^\omega$. Let us argue that
     $\sigma$ is state preserving; i.e., $\rho^\omega\sigma=\rho\otimes \tau$. For $b\in A'\cap B^\omega$, the functional $A\ni a\mapsto \rho^\omega(ab)$ is $U(B)$ invariant. By the relative Dixmier property of $B\subset A$, any such functional is a scalar multiple of $\rho$. Hence, $\rho^\omega(ab)=\rho(a)\rho^\omega(b)$ for all $a\in A$, $b\in A'\cap B^\omega$, which implies that 
     \[
     \rho^\omega\sigma(a\otimes c)=\rho(a)\rho^\omega(\sigma(1\otimes c))=
     \rho(a)\tau(c), 
    \]
    for all $a\in A$ and $c\in \mathcal C$ (where we have used the uniqueness of trace in $\mathcal C$). Thus, $\sigma$  is state  preserving.  This means that  the first factor embedding 
	\[
(B\subset (A,\rho))\to (B\otimes \mathcal C\subset (A\otimes \mathcal C, \rho\otimes \tau))
	\]
	is an existential embedding of inclusions of C*-probability spaces. Trivially, the diagonal embedding 
	\[
	(B\otimes \mathcal C\subset A\otimes \mathcal C)\to ((B\otimes \mathcal C)^\omega\subset (A\otimes \mathcal C)^\omega)
	\]
is also existential. Composing them, we obtain that the embedding of $B\subset (A,\rho)$
in  
\[
(B\otimes \mathcal C)^\omega\subset ((A\otimes \mathcal C)^\omega, (\rho\otimes\tau)^\omega)
\]
is existential.
We will be done once we have shown that this embedding can be factored through the first factor embedding of $B\subset A$
in  $B*\mathcal C_1\subset A*\mathcal C_1$,
where $\mathcal C_1$ is the C*-probability space generated by a single semicircular element. Put differently, we will be done once we have found a semicircular element $s\in (B\otimes \mathcal C)^\omega$ such that $C^*(A,s)\cong A*\mathcal C_1$.

The proof continues verbatim along the lines of Ozawa's proof, so we will only indicate the main steps. Using
the relative Dixmier property of $B\subset A$, we choose 
collections of unitaries $(u_{i,k})_{k=1}^{N_i}$ in $B$ such that 
the unitary mixing operators
\[
A\ni a\mapsto \frac{1}{N_i}\sum_{k=1}^{N_i} u_{i,k}au_{i,k}^*
\]
converge pointwise to $A\ni a\mapsto \rho(a)1$. We arrange for
$(u_{i,k})_{k=1}^{N_i}$ to be closed under conjugation; i.e., 
$(u_{i,k}^*)_{k=1}^{N_i}$ and $(u_{i,k})_{k=1}^{N_i}$ agree up to a permutation. A similar argument to the previous paragraph shows that the embedding of inclusions
\[
(B\subset A) \to (B\otimes C_r^*(\F_\infty) \subset A\otimes C_r^*(\F_\infty))
\]is existential. Thus, by replacing the $u_{i,k}$ with $u_{i,k}\otimes t_k$ (where the $t_k$ are the generators of $\F_\infty$), using the resulting free independence, and applying Voiculescu's inequality \cite[Proposition~7.4]{junge2005picuineq}, we may also arrange for 
\[
\frac{1}{N_i}\sum_{k=1}^{N_i} u_{i,k}au_{i,k}\to 0
\]
for all $a\in A$ with $\rho(a)=0$.

Let  $l_k\in \mathcal O_\infty$ for $k=1,\ldots$ be the isometries generating $\mathcal O_\infty$. Identify 
$\mathcal C$ with the C*-subalgebra generated by $\{(l_k+l_k^*)/2:k=1,\ldots\}$ and $\tau$ with the restriction of the vacuum state on $\mathcal O_\infty$ to $\mathcal C$.
Define $T_i\in B\otimes \mathcal O_\infty$ as
\[
T_i=\frac{1}{\sqrt{N_i}}\sum_{k=1}^{N_i} (u_{i,k}+u_{i,k}^*)\otimes l_k, 
\]
and  
\[
s_i=\frac{1}{2}(T_i+T_i^*)\in B\otimes \mathcal C.
\]
Let $s=(s_i)_i\in (B\otimes \mathcal C)^\omega$.
By Ozawa's proof  $s\in (B\otimes \mathcal C)^\omega$ is a semicircular element such that $C^*(A,s)\cong A*\mathcal C_1$.
	\end{proof}

 \begin{example}
 Let $\mathcal O_2$ denote the Cuntz algebra
 generated by two isometries, and let $M_{2^\infty}\subseteq \mathcal O_2$ denote the canonically embedded UHF algebra \cite[4.2]{RordamStormer-EMS126}. 
 Let $\rho$ be the state on $\mathcal O_2$ obtained via the expectation  $E\colon \mathcal O_2 \to M_{2^\infty}$ followed by the trace  $\tau$ on $ M_{2^\infty}$. Then, by Theorem~\ref{Z-stable selfless}, the inclusion $M_{2^\infty}\subset (\mathcal O_2,\rho)$ is selfless. (That is, the selflessness of $(\mathcal O_2,\rho)$ can be certified by Haar unitaries  coming from $M_{2^\infty}$.)
 Indeed, by \cite[Example 6.8]{sarkowicz2025tensorially}, the inclusion  $M_{2^\infty}\subset \mathcal O_2$ is $\mathcal Z$-stable. In the lemma below we show that it has the relative Dixmier property.
The other conditions of Theorem~\ref{Z-stable selfless} can be routinely checked.
  \end{example}
	
\begin{lemma}
The inclusion $M_{2^\infty}\subset (\mathcal O_2,\rho)$ has the relative Dixmier property.
\end{lemma}

\begin{proof}
Let $s_1,s_2$ denote the isometries that generate $\mathcal O_2$.		
It suffices to show that a dense set of elements  $a\in \mathcal O_2$ can be $U(M_{2^\infty})$-averaged to $(\tau\circ E)(a)$. Thus, we may assume that
\[
a=\sum_{k=1}^{m} (s_1^*)^kb_{-k}+b_0+\sum_{k=1}^mb_ks_1^k
\]
where $b_k\in M_{2^\infty}$ for all $k$. Consider a term $bs_1^k$. For $u\in U(M_{2^\infty})$, we have 
\[
u^{*} bs_1^ku=(u^*b\lambda^k(u)) s_1^k
\]
where $\lambda\colon M_{2^\infty}\to M_{2^\infty}$ is the Bernoulli right-shift endomorphism. We thus see that the set $M_{2^\infty}s_1^k$ is convex and invariant under conjugation by unitaries in $M_{2^\infty}$. This holds similarly for the terms $(s_1^*)^kb$.  
Therefore, by a process of successive averaging, we can consider each of the terms of the above sum separately. Since $M_{2^\infty}$ has the Dixmier property, $b_0$ can be $U(M_{2^\infty})$-averaged to $\tau(b_0)$. Let us show that the remaining terms can be $U(M_{2^\infty})$-averaged to 0.  Equivalently, we must show that each $b\in M_{2^\infty}$ can be averaged to 0 in the following sense:
\[
T_i(b)=\frac{1}{N_i}\sum_{j=1}^{N_i}u_{i,j}^*b\lambda^k(u_{i,j})\to 0,
\]
for $u_{i,j}\in U(M_{2^\infty})$. This is precisely the type of unitary averaging relative to an automorphism studied by R{\o}rdam in \cite{Rordam-Avg2023}, except $\lambda^k$ is here an endomorphism rather than an automorphism.

Fix $k\in \N$. Set $M_{2^n}:=M_2(\C)^{\otimes n}\otimes 1\otimes 1\cdots\subset M_{2^\infty}$ for all $n$. Suppose without loss of generality that $b\in M_{2^n}$ for some $n$. Notice that if we choose the unitaries $u_{i,j}$ of the form $1^{\otimes n}\otimes u_{i,j}'$, so that they belong to $M_{2^n}'\cap M_{2^\infty}$, then
$T_i(b)=bT_i(1)$.  We thus see that it suffices to consider the case $b=1$. 

Let $\epsilon>0$. By \cite[Proposition 5.1.3]{RordamStormer-EMS126}, there exist $N\in \N$ and projections $p_1,\ldots,p_{2^k}\in M_{2^N}$ such that $1=\sum_{i=1}^{2^k} p_i$ and   $\|\lambda^k(p_i)-p_{i+k}\|<\epsilon$ for all $i$ (with indices taken modulo $2^k$).  For $\omega=(\omega_i)_{i=1}^{2^k}\in \mathbb T^{2^k}$ define
$u(\omega)=\sum_{i=1}^{2^k} \omega_ip_i$. Then
\[
y:=\int_{\mathbb T^{2^k}}u(\omega)^*\lambda^k(u(\omega))d\omega =\sum_{i=1}^{2^k} p_i\lambda^k(p_i) 
\]
(cf. \cite[Lemma 2.13]{Rordam-Avg2023}). By the orthogonality of the projections $(p_i)_i$, we have
$\|y\|=\max_i \|p_i\lambda^k(p_i)\|<\epsilon$, where we have used that   $\lambda^k(p_i)\approx_{\epsilon} p_{j}$ for $j\neq i$.
Since $y$ is a limit of averages of the form $T_i(1)$, and $\epsilon$ is arbitrary, we are done.
\end{proof}

\section{The PHP property and reduced free products}
Following Ozawa in the group case, we define property PHP (Powers--Haagerup--Pisier) for a $C^*$-probability space $(A,\rho)$ represented faithfully on a Hilbert space $H$, and more generally, for an inclusion $B\subset (A,\rho)$.

\begin{definition}(Cf. \cite[Section 8]{ozawa2025proximalityselflessnessgroupcalgebras}.)
	Let $(A,\rho)$ be a C*-probability space and assume that $A\subseteq \mathbb B(H)$. 
	Let $B\subset A$ be a $C^*$-subalgebra. Let us say that the inclusion $B\subset (A,\rho)$ has the PHP property if for all finite sets  $F\subset \ker(\rho)$, all $\epsilon > 0$, and all $n\in \N$, there exist $u_i,P_{i},P_i^+$, for $i=1,\ldots,n$ such that  $u_i\in  U(B)$, 
	$P_i,P_i^+\in \mathbb B(H)$ are projections with $P_i^+\leq P_i$,  and the following hold:
	\begin{enumerate}
		\item $(P_i)_{i=1}^n$ is a pairwise orthogonal family of projections;
		\item $u_iP_i^+u_i^*\leq P_i^+$ and $u_i(1-P_i)u_i^* \leq  P_i^+$  for all $i$;
		\item $\|P_ixP_j\|<\epsilon$ for all $x\in F$ and all $i,j$.
	\end{enumerate}
\end{definition}

We note that to verify the PHP property it suffices to choose the finite sets $F$ from a set $\mathcal S$ whose linear span is dense in $\ker \rho$. 
A further reduction is the following:

\begin{lemma}
	If we can verify the PHP property for  $n=3$, then we can verify it for all $n\in \N$. 
\end{lemma}
\begin{proof}
	Note that if property PHP holds for some $n$, it also holds for all $m\leq n$, by simply discarding some of the
	$P_i,P_i^+,u_i$. Thus, we must show that it holds for arbitrarily large $n$.
	
	Let $F\subseteq \ker \rho$ be a finite set 	and $\epsilon>0$.
	Suppose that we have  $u_i,P_i,P_i^+$ for $i=1,2,3$ that satisfy  (1), (2), (3) in the definition of the PHP property. 
	Since $P_j\leq 1-P_i$ for $i\neq j$, we have by (2) that $u_iP_ju_i^*\leq P_i^+$ for $i\neq j$. Consider 
	the family of six projections $P_{i,j}=u_iP_ju_i^*$, for $i\neq j$. Observe that they are pairwise orthogonal, since
	if $i\neq i'$ then $P_{i,j}$ and $P_{i',j'}$ are subprojections of $P_i$ and $P_{i'}$, respectively, which are orthogonal, while
	if $j\neq j'$, then $P_{i,j}$ and $P_{i,j'}$ are conjugates of $P_j$ and $P_{j'}$ by the same unitary. The norm bounds on the cut-downs $P_{i,j}xP_{i',j'}$ continue to  hold, since the new projections are subprojections of the original projections. This shows (3) for the new family of projections. Finally, to check  (2), choose $P_{i,j}^+=u_iP_j^+u_i^*$ and  $u_{i,j}=u_iu_ju_i^*$. More generally, the same procedure allows us to pass from  PHP for  $n$ to PHP for $n^2-n$.
    Thus starting with $n=3$, we obtain the PHP for arbitrarily large values of $n$. 
\end{proof}

The following holds as in \cite[Theorem~14]{ozawa2025proximalityselflessnessgroupcalgebras}. We briefly sketch the proof.
\begin{theorem}
	If $B\subset (A,\rho) \subset \mathbb B(H)$ has property PHP, then $B\subset (A,\rho)$ is a selfless inclusion.
\end{theorem}

\begin{proof}
	Let $F\subset \ker(\rho)$ be a finite set, $\epsilon>0$, and $n\in\N$. Choose $P_i,P_i^+,u_i$ for $i=1,\ldots,n$ that satisfy (1), (2), (3)  of the PHP relative to  $F$ and $\epsilon$.
	Define $T \in \mathbb B(H)$ by 
	\[
	T = \frac{1}{\sqrt{2n}}\sum_{i=1}^n(P_i^+u_i + u_i^* (1-P_i^+)).
	\]
	Then 
	\[
	T+T^*=\frac{1}{\sqrt{2n}}\sum_{i=1}^n(u_i+u_i^*)\in B.
	\]
	Moreover, as argued in \cite[Theorem~14]{ozawa2025proximalityselflessnessgroupcalgebras},  
	we have
\[	
	(1- \frac{1}{2n})\leq T^*T \leq 1
	\]
and
\[	
\|T^*xT\| < \epsilon,\quad \rho(xTT^*x^*)<\epsilon\hbox{ for all }x\in F.
\]	
	By taking a suitable ultralimit, we obtain an isometry $T \in \mathbb B(H)^{\mathcal U}$ as required in the hypothesis of Lemma \ref{lem: univ-free-prod}. Thus there is an embedding $A * \mathcal T$ into $\mathbb B(H)^{\mathcal U}$ which restricts to the diagonal embedding on $A$.  This embedding maps $T+T^*$ to a self-adjoint operator in $B^{\mathcal U}$, witnessing the selflessness of $B\subset (A,\rho)$.
\end{proof}

It is proved in  \cite{fmmm2025selflessfreeprod} that  reduced free products satisfying an Avitzour-type condition are selfless. In the theorem below we revisit the same argument, which originates in \cite{ozawa2025proximalityselflessnessgroupcalgebras}, explicitly framing it as a verification of the PHP property. 
We have tailored  the statement of this theorem to suit our application to free complexifications in the next section.

\begin{theorem}\label{alaavitzour}
	Let $(A,\rho)=(A_1,\rho_1)*(A_2,\rho_2)$ be a reduced free product. 
    Let $z\in A_1$ and $a,b,c\in A_2$ be unitaries satisfying that
	\begin{itemize}
	\item 
	$\rho_1(z)=\rho_2(a)=\rho_2(b)=\rho_2(c)=\rho_2(c^{-1}b)=\rho_2(c^{-1}a)=0$,
	\item 
	$z$ belongs to the centralizer of $\rho_1$, and $a,b,c$ belong to the centralizer of $\rho_2$.
\end{itemize}
Let $B=C^*(z^{-1}az,b,c)$. Then $B\subset (A,\rho)$ has the PHP property, and consequently it is a selfless inclusion.
\end{theorem}


Before proving this theorem we make some preliminary remarks on reduced words in reduced free products.
Let $(A,\rho)=(A_1,\rho_1)*(A_2,\rho_2)$ be as in the theorem, and identify $A_1$ and $A_2$ as C*-subalgebras of $A$.
We call $w\in A$ an alternating centered word, or a reduced word,  if 
\[
w=c_1c_2\cdots c_l
\]
where $c_i\in A_{j(i)}\ominus \C 1$ for all $i$ and $j\colon \{1,\ldots,l\}\to \{1,2\}$
is an alternating function; i.e., $j(i)\neq j(i+1)$ for all $1\leq i\leq l-1$. If $w$ is a reduced word, then its
representation as an alternating centered product as above is unique. The number of factors (i.e., letters) in this representation is called the length of the word. From the
construction of the reduced free product we have that $A$ is spanned by $1$ and the reduced words. In the proof below we shall use the following facts, which are easily established by induction. 

\begin{enumerate}
\item[Fact 1:]
Let $w,w'\in A$ be reduced words. Then $w\perp w'$ (where 
$\langle w,w'\rangle:=\rho((w')^*w)$) if $w$ and $w'$ have different length, while if they have the same length, and say $w=c_1c_2\cdots c_l$, $w'=c_1'c_2'\cdots c_l'$, then
\[
\langle w,w'\rangle =\langle c_1,c_1'\rangle\cdots \langle c_l,c_l'\rangle.
\] 
We conclude that if two reduced words have orthogonal letters at the same position, then they are orthogonal.

\item[Fact 2:] 
If $w_1,x,w_2$ are reduced words such that $w_1$ and $w_2$ are at least as long
as $x$, then 	every non-scalar summand in the reduced expansion of $w_1 x w_2$ is of one of the following forms:
	\begin{itemize}
		\item a reduced word $w_1' y w_2'$,
		where $w_1'$ is an initial segment of $w_1$, $w_2'$ is a terminal segment of $w_2$, and $y$ is a reduced word (possibly empty),
		
		\item an initial segment of $w_1$,
		
		\item a terminal segment of $w_2$.
\end{itemize}
\end{enumerate}

\begin{proof}[Proof of Theorem \ref{alaavitzour}]
	We set  $H=L^2(A,\rho)$ and identify $A$ with a C*-subalgebra of $\mathbb B(H)$ via the GNS representation.
	
For a reduced word $w\in A$, denote by $B_w\subseteq H$ the  closed linear span of the reduced words that start with $w$, i.e., of the form $w\eta$ for some reduced word $\eta$. Denote by  $P_w$  the orthogonal projection onto $B_w$.

Let $F\subseteq A\ominus\C 1$ be a finite set and $\epsilon>0$. As remarked above, to verify the PHP property it suffices to let  $F$ range through finite subsets of a set with dense linear span in $A\ominus \C 1$. Thus, we may assume that $F$ consists of reduced words. We can also simultaneously conjugate the elements of $F$ by a unitary leaving $A\ominus \C 1$ invariant (i.e., in the centralizer of $\rho$) and 
work with this new set.  Observe that, given a reduced word $x$, the element
 \[
 x'=(cz^{-1}az)^{-N_0}x(cz^{-1}az)^{N_0}\] 
 is a linear combination of nontrivial reduced words, by the invariance of the reduced free product state $\rho$ under conjugation by $z^{\pm 1}$ and $a,c$. Moreover, using Fact 2 stated above, we can choose a large enough $N_0$  such that $x'$ is a linear combination of reduced words that either
	\begin{itemize}
		\item 
		start in $z^{-1}$ and end in $z$, or
		\item 
		have either the  forms $(cz^{-1}az)^l$ or  $az(cz^{-1}az)^l$ for $l>0$,
        \item 
        are  inverses of the words in the previous bullet point.        
	\end{itemize}	
Let  $N_0$ be large enough so that the above applies to every $x\in F$. Replacing $F$ by $(cz^{-1}az)^{-N_0}F(cz^{-1}az)^{N_0}$, and then replacing this new set by the reduced words supporting its elements,  we may (and will) assume that $F$ consists of finitely many words as in the three bullet points above.

	To verify the PHP property, we set 
	\[
	w_i=(bz^{-1}az)^{i}(cz^{-1}az)(bz^{-1}az)^{3-i},\quad w_i^+=w_i(bz^{-1}az)
	\]
	for $i=1,2,3$.
	We then choose 
	\[
	P_i=P_{w_i},\quad P_i^+=P_{w_i^+},\quad u_i=w_i^+c^{-1}w_i^*,
	\]
	for $i=1,2,3$ (by a previous lemma, it suffices to verify the PHP with $n=3$). Note that
	$u_i\in C^*(z^{-1}az,b,c)=B$ for all $i$.

	To check the orthogonality between the projections $P_1,P_2,P_3$, 
we note that the first appearance of the letter $c$ in each word $w_1,w_2,w_3$ is matched at the same position with a $b$ in the other words. Since $b\perp c$, we deduce from Fact 1 stated above that the words starting in $w_i$ and $w_j$ for $i\neq j$ are orthogonal, i.e., $B_{w_i}\perp B_{w_j}$.	This proves (1) of the PHP property.

		To prove  (2) of the PHP property,
	fix $1\leq i\leq 3$ and set $w=w_i$, $w^+=w_i^+$, and $u_w=u_i$.  
	The second property in the definition of the PHP can be reformulated as 
	$u_wB_{w^+}\subseteq B_{w^+}$ and $u_{w}(H\ominus B_w)\subseteq B_{w^+}$.

	To prove $u_wB_{w^+}\subseteq B_{w^+}$ it suffices to do so on the reduced words
	spanning $B_{w^+}$. Let $\xi=w^+\eta$ be a reduced word in $B_{w^+}$. Then 
	\begin{align*}
		u_w\xi  &= (w^+c^{-1}w^*)w^+\eta\\
		&=(w^+c^{-1}w^*)w(bz^{-1}az)\eta \\
		&= w^+(c^{-1}b)z^{-1}az\eta.
	\end{align*}
	Notice that $w^+(c^{-1}b)$ is a reduced word ending in $c^{-1}b\in A_2\ominus \C$ (since $w^+$ ends in $A_1$ and  $b\perp c$). So  
	$w^+(c^{-1}b)z^{-1}az\eta$ is already reduced and starts with $w^+$.
	This verifies the first inclusion.

	Consider now the inclusion $u_{w}(H\ominus B_w)\subseteq B_{w^+}$. Again, 
	to prove this inclusion it suffices to do so on the reduced words
	spanning $H\ominus B_w$, i.e., reduced words not starting in $w$. 
	Take a reduced word $\xi\in H\ominus B_w$. Then 
	\[
	u_w\xi = w^+c^{-1}w^*\xi.
	\]
	We claim that $w^*\xi$ belongs to the span of reduced words that start in $A_1\ominus \C$.
	To show this, it suffices to check that it is orthogonal to every reduced  word that starts in $A_2\ominus \C$ and to 1.
	Let $\xi'$ be either $1$ or a reduced centered word that starts in $A_2\ominus \C$. Since $w$ ends in $z$, $w\xi'\in B_w$, and so
	$\langle w^*\xi,\xi'\rangle=\langle \xi, w\xi'\rangle=0$. The claim follows. It now follows that $w^+c^{-1}(w^*\xi)$ belongs to the span of reduced words that
	start in $w^+$, as desired.

		Finally, let us prove (3) in the definition of the PHP property, i.e., that $P_{w_i}xP_{w_j}=0$ for all $i,j$ and all $x\in F$. Note that $P_{w_i}xP_{w_j}=0$  is equivalent to
	\begin{equation}\label{perpij}
	 x w_i\eta\perp w_j\eta'
	\end{equation}
	for all $i,j$ and reduced words $w_i\eta$ and $w_j\eta'$. Suppose first that $x$ is a reduced word starting in $z^{-1}$ and ending in 
	$z$. Then $xw_i$ is already  a reduced word starting in $z^{-1}\in A_1\ominus \C$, while $w_j\eta'$ starts in $b\in A_2\ominus \C$. We thus have the desired orthogonality \eqref{perpij}.
Suppose that  $x=(cz^{-1}az)^l$ for some nonzero $l>0$. Then
$xw_i\eta$ is a reduced word that starts in $c$, while
$w_j\eta'$ starts in $b$. Since $b\perp c$, we again have \eqref{perpij}. Suppose that $x=az(cz^{-1}az)^l$ for $l>0$.
Then $xw_i\eta$ is a reduced word whose third letter is a $c$, matched in the position by an $a$ in $w_j\eta'$. We 
thus have \eqref{perpij}. Finally, the 
inverses of these two types of words can be dealt with by rewriting  
\eqref{perpij} as $w_i\eta\perp x^{-1}w_j\eta'$.
\end{proof}

\section{Free complexifications and selfless compact quantum groups}
Let us  discuss the operation of free complexification, introduced by Banica \cite{BanicaNote2008}, and further studied in \cite{Raum2012}, \cite{TarragoWeber}. 
Here we work with reduced free products rather than universal ones, in contrast to \cite{BanicaNote2008}.

Let $(A,\rho)$ be a C*-probability space.  Let $X\subseteq A$ be a set generating $A$ as a unital C*-algebra; i.e., such that
$A=C^*(X, 1)$. Form the reduced free product $(A,\rho)*(C(\T),\lambda)$ (where $\lambda$ denotes the state induced by the Lebesgue measure).
Let 
\[
Xz:=\{xz:x\in X\}\subseteq A*C(\T),
\] 
where $z$ is the canonical generator of $C(\T)$. We call
\[
\widetilde A =C^*(1,Xz)\subseteq A*C(\T)
\]
the free complexification of $(A,\rho,X)$. We endow $\widetilde A$ with the state $(\rho*\lambda)\big|_{\widetilde A}$ and the generating set $Xz$.
Note that, although not explicitly reflected in our notation,  the free complexification $\widetilde A$ depends  
both on $\rho$ and $X$. 

We denote by $\PV_X A\subseteq A$ the C*-subalgebra generated by $\{1\}$
and 	
\[
XX^* = \{x y^* : x,y\in X\}.
\]
We call $(\PV_X A,XX^*)$ the projective version
of $(A,X)$ (see \cite{BanicaIntro}). Note that, although it is not generally the  case that $A\subseteq \widetilde A$,
we do have $\PV_X A\subseteq \widetilde A$.
In the theorem below  we also make reference to  the C*-subalgebra $\PV_{X^*} A$ generated by $\{1\}$
and 	$X^*X$; i.e., the projective version of $(A,X^*)$. 
We note that $z^{-1}(\PV_{X^*}A) z\subseteq \widetilde A$.


\begin{theorem}\label{thm:freecomplexification}
Let $(A,X,\rho)$ be a C*-probability space endowed with a generating set $X$. Suppose that there exist unitaries 
$a\in \PV_{X^*}A$ and $b,c\in \PV_{X}A$ in the centralizer of $\rho$ and such that $\rho(a)=\rho(b)=\rho(c)=\rho(c^{-1}a)=\rho(c^{-1}b)=0$. Then the inclusion
$\widetilde A\subset A*C(\T)$ is selfless. In particular, the free complexification $\widetilde A$ is selfless.	
\end{theorem}		
	
\begin{proof}	
Consider the reduced free product $(A,\rho)*(C(\T),\lambda)$. We readily check that the hypotheses of Theorem \ref{alaavitzour} are verified 
for this free product with $A_1=C(\T)$, $A_2=A$, and 
the unitaries $z\in C(\T)$ and $a,b,c\in A$. Thus, defining $B=C^*(z^{-1}az,b,c)$, we deduce that
the inclusion $B\subset A*C(\T)$ is selfless. On the other hand, it is easily checked that $z^{-1}az,b,c\in \widetilde A$. Thus, the inclusion
$\widetilde A\subset A*C(\T)$ is selfless.
\end{proof}	

Let us recall the definition of the reduced free unitary and free orthogonal compact quantum groups. 

Let $n\in\N$, with $n\geq 2$.  The reduced free unitary compact quantum group is obtained as follows: start from the universal C*-algebra $\fA$ generated
by elements $\{u_{i,j}:i,j=1,\ldots,n\}$ such that, with $u=(u_{ij})_{ij}$, the relations
\[
u^*u=uu^*=1_n,\quad (u^t)^*(u^t)=u^t(u^t)^*=1_n 
\]	
are satisfied (where $u^t$ is the transpose of $u$).
This C*-algebra comes equipped with a comultiplication $\Delta:\fA \to \fA\otimes_{\min}\fA$ defined by $\Delta(u_{ij}) = \sum_k u_{ik}\otimes u_{kj}$. It also comes with a unique Haar state $\tau$ which is tracial and invariant under $\Delta$; i.e., for all $a\in\fA$, $(\tau\otimes  \iota)\Delta(a) = (\iota\otimes \tau)\Delta(a) = \tau(a)$ (see \cite{Woro1987cptquantumgps,BanicaCollins2007cptquantumgpsHaar}). Take the GNS representation of this universal C*-algebra with respect to the Haar trace. The resulting C*-algebra is the  reduced free unitary compact quantum group. We denote it by $A_u(n)$.

Similarly, the reduced free orthogonal compact quantum group is obtained starting from the universal C*-algebra generated by elements $\{v_{ij}\}_{i,j=1}^n$  satisfying
\[
v^*v=vv^*=1_n,\qquad v_{ij}^*=v_{ij}\ \text{ for all } i,j,
\]
with $v=(v_{ij})$ (which again comes equipped with the comultiplication $\Delta(v_{ij}) = \sum_k v_{ik}\otimes v_{kj}$) and passing to the GNS representation with respect to its (tracial) Haar state. We denote the reduced free orthogonal compact quantum group
by $A_o(n)$.

Banica showed \cite{BanicaU_n1997} (see also \cite[Theorem~3.4]{BanicaNote2008}),  that 	$A_u(n) \cong \widetilde{A_o(n)}$,
for $n\ge 2$. More concretely,  $A_u(n)$ is isomorphic to the free complexification of $A_o(n)$, relative to  the Haar state and generating set
$X$ given by the entries of $v=(v_{i,j})_{ij}$.  
For $n=2$, Banica showed in \cite{BanicaU_n1997} that  $A_u(2)$ can be alternatively obtained as the free complexification of $(C(SU(2)),\lambda, \{v_{ij}\}_{ij})$, where in this case $\lambda$ is given by the Haar measure on $SU(2)$ and 
$v\in M_2(C(SU(2)))$ is the fundamental representation of $SU(2)$; i.e., the inclusion map 	$SU(2)\to M_2(\C)$.

	\begin{theorem}\label{thm:cpct-qntm-unitry-slflss}
		$A_u(n)$ is selfless for all $n\ge2$.
	\end{theorem}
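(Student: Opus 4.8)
The plan is to derive the result from Theorem \ref{thm:main2}, whose last sentence asserts that the free complexification $\widetilde A$ is selfless once its hypotheses are met. I would apply it to a base C*-probability space $A$ for which $A_u(n)$ is a free complexification, distinguishing the cases $n\ge 3$ and $n=2$.

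For $n\ge 3$, take $A=A_o(n)$ with its tracial Haar state $\tau$ and generating set $X=\{v_{ij}\}$, so that $A_u(n)\cong\widetilde{A_o(n)}$ by Banica \cite{BanicaU_n1997} (see Subsection \ref{sect: free-complex-prelim}). The condition $X^*X\not\subseteq\C1$ is clear, since $v_{ij}^*=v_{ij}$ and $v_{11}^2\notin\C1$, and rapid decay relative to the word-length filtration $V_n=\mathrm{span}((X\cup\{1\})^n)$ (which contains $X$) is Vergnioux's theorem \cite{VergniouxRD2007}. The crux is to verify Case I, namely that $N=\PV A_o(n)''\subseteq M=A_o(n)''$ is an irreducible inclusion of $\mathrm{II}_1$ factors. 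For this I would introduce the sign automorphism $\alpha$ determined by $v_{ij}\mapsto -v_{ij}$: it is a trace-preserving $*$-automorphism with $\alpha^2=\mathrm{id}$, acting by $(-1)^k$ on the span $H_k$ of the matrix coefficients of the $k$-th irreducible corepresentation. Since every even-length product of generators factors through $\{v_{ij}v_{kl}\}$, one gets $N=\overline{\PV A_o(n)}^{\,w}=M^\alpha$, the fixed-point algebra of (the normal extension of) $\alpha$.

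Granting the known factoriality of $M=L^\infty(O_n^+)$ and $N=L^\infty(PO_n^+)$ for $n\ge 3$, the inclusion is then irreducible because $\alpha$ is outer: if $\alpha=\mathrm{Ad}(u)$ for some unitary $u\in M$, then $\alpha(u)=u$ forces $u\in M^\alpha=N$, while $uyu^*=\alpha(y)=y$ for all $y\in N$ shows $u\in\mathcal Z(N)$; as $\alpha\ne\mathrm{id}$ (it is $-1$ on $H_1\ne 0$) we have $u\notin\C1$, contradicting factoriality of $N$. Thus $\alpha$ is an outer $\Z/2$-action with fixed-point algebra $N$, so $N\subseteq M$ is an irreducible inclusion of $\mathrm{II}_1$ factors (of index $2$), which is exactly Case I; Theorem \ref{thm:main2} then yields selflessness of $\widetilde{A_o(n)}\cong A_u(n)$.

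For $n=2$ the algebra $L^\infty(O_2^+)$ is not a $\mathrm{II}_1$ factor, so Case I is unavailable; instead I would use Banica's alternative description $A_u(2)\cong\widetilde{C(SU(2))}$ and verify Case II with $A=C(SU(2))$, $\tau$ the Haar integral, and $X=\{v_{ij}\}$ the fundamental matrix coefficients. Here $A$ is abelian, so the diagonal copy of $\PV A$ lies in $A'\cap(\PV A)^\omega$; since $\PV(C(SU(2)))\cong C(SO(3))$ carries the atomless Haar measure, it is diffuse with respect to $\tau^\omega$, giving Case II. Combined with rapid decay for $C(SU(2))$ relative to the degree filtration (the polynomial-growth classical analogue of \cite{VergniouxRD2007}) and $X^*X\not\subseteq\C1$, Theorem \ref{thm:main2} applies and shows $\widetilde{C(SU(2))}\cong A_u(2)$ is selfless. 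The main obstacle throughout is the von Neumann algebraic input required for Case I when $n\ge 3$, namely the factoriality of both $L^\infty(O_n^+)$ and $L^\infty(PO_n^+)$; once these are in hand, the outerness of the sign automorphism and all remaining verifications are short.
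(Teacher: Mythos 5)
Your proposal is correct, and in the case $n=2$ it coincides with the paper's own argument (Banica's identification $A_u(2)\cong\widetilde{C(SU(2))}$, rapid decay for the compact Lie group $SU(2)$, and Case II via $\PV C(SU(2))\cong C(SO(3))$ with diffuse Haar measure, using abelianness). For $n\ge 3$, however, you verify Case I by a genuinely different route. The paper follows Vaes--Vergnioux's simplicity proof for $A_o(n)$ \cite{VaesVergnioux2007}: the averaging operator $P(a)=\frac{1}{n^2}\sum_{ij}v_{ij}av_{ij}$ has powers converging pointwise to $\tau(\cdot)1$, and since $P^{(2)}$ has coefficients in $\PV A_o(n)$, the even powers witness the relative Dixmier property of $\PV A_o(n)\subseteq A_o(n)$; combined with the $L^2$-contractivity of the averaging, this yields \emph{in one stroke} that both GNS closures are II$_1$ factors and that the inclusion is irreducible, with no prior factoriality input. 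You instead identify $N=\PV A_o(n)''$ as the fixed-point algebra $M^\alpha$ of the sign automorphism $\alpha(v_{ij})=-v_{ij}$ (this identification is correct: $\alpha$ acts by $(-1)^k$ on the coefficient space of the $k$-th irreducible corepresentation, and the even part is exactly the weak closure of the projective version, as one sees using the normal conditional expectation $\tfrac12(\mathrm{id}+\alpha)$), prove outerness of $\alpha$ from factoriality of $N$, and invoke the standard fact that the fixed-point subfactor of an outer action of a finite group on a II$_1$ factor is irreducible. This is clean, but note where the burden lands: your argument \emph{consumes} factoriality of both $L^\infty(O_n^+)$ and $L^\infty(PO_n^+)$ as external inputs, whereas the paper's argument \emph{produces} them. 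Factoriality of $L^\infty(O_n^+)$ for $n\ge 3$ is indeed Vaes--Vergnioux; factoriality of $L^\infty(PO_n^+)$ is true but less standard, and you must cite it precisely --- for instance via Banica's identification of $PO_n^+$ with the quantum automorphism group of $(M_n(\C),\mathrm{tr})$ together with Brannan's factoriality theorem for trace-preserving quantum automorphism groups --- taking care not to quote a source that itself deduces factoriality of $L^\infty(PO_n^+)$ from irreducibility of the inclusion, which would be circular. With such a citation in place your proof is complete; what the paper's route buys is self-containedness (one averaging estimate gives Dixmier, factoriality, and irreducibility simultaneously), while yours buys extra structural information, namely that $\PV A_o(n)''\subseteq A_o(n)''$ is the fixed-point inclusion of an outer $\Z/2$-action and hence has index $2$.
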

	
	\begin{proof}
First assume $n>2$. As remarked above, $A_u(n)$ is the free complexification of $(A_o(n),\tau)$ relative to the generating set
$X=\{v_{ij}:i,j\}$. Since $X$ is a selfadjoint set, $\PV_{X} A_o(n)=\PV_{X^*} A_o(n)$. 
We will be done once we have shown that $\PV_X A_o(n)$ contains a Haar unitary $u$, as then we can 
apply Theorem \ref{thm:freecomplexification} with $a=b=u$ and $c=u^2$.
It is known from \cite[Proposition 1]{BanicaU_n1997} that $s=\sum_{i=1}^n v_{ii}$ has a semicircular distribution. It follows that 
$|s|:=\sqrt{s^2}\in \PV_X A_o(n)$ has a quarter-circular distribution (which is diffuse). Thus, through functional calculus, we can find a Haar unitary in $\PV_X A_o(n)$.

Assume that $n=2$. In this case $A_u(2)$ is the free complexification of $(C(SU(2)), \lambda)$, where the generating set $X$
is the entries of the fundamental representation $v\in M_2(C(SU(2)))$ and $\lambda$ is induced by the Haar measure on $SU(2)$. 
We have 
\[
\PV_{X^*}  C(SU(2))=\PV_{X} C(SU(2))\cong C(SO(3)),
\]
where the first equality holds  by the commutativity of  $C(SU(2))$ and the second one by  \cite[Th\'eor\`eme 5]{BanicaU_n1997}.
Since the Haar measure on $SO(3)$ is diffuse, $C(SO(3))$ contains a Haar unitary $u$. As in the previous case,  choosing $a=b=u$ and $c=u^2$, we can apply Theorem \ref{thm:freecomplexification} to reach the desired conclusion.
\end{proof}

Another class of compact quantum groups that arise as free complexifications are the half-liberated unitary quantum groups introduced in \cite{BDDstandardmodel}. We briefly recall the definition. For $n\in\mathbb N$, the half-liberated unitary compact quantum group is the universal C*-algebra generated by the entries of an $n\times n$ matrix $u$, subject to the same relations as the free unitary compact quantum group—namely, that $u$ and $u^t$ are unitaries—and, in addition, the relations
\[
ab^*c=cb^*a\qquad\text{for all }a,b,c\in\{u_{ij}:i,j=1,\ldots,n\}.
\]
We denote by $A_u^*(n)$ the reduced half-liberated quantum unitary group, i.e., the image of the universal one under the GNS representation induced by the Haar state.

\begin{corollary}
The reduced half-liberated unitary compact quantum groups $A_u^*(n)$ are selfless for $n\ge 2$.
\end{corollary}

\begin{proof}
By \cite[Proposition 3.7]{BanicaBichon}, $A_u^*(n)$ is isomorphic to the free complexification of $(C(U(n)),\tau)$, where 
$U(n)$ is the group of $n\times n$ unitary matrices, $\tau$ is the Haar state, and
the generating set $X$ is the entries of the fundamental representation of $U(n)$. 
 (Note: In \cite{BanicaBichon}, the universal half-liberated unitary quantum group is denoted by $C(U_n^\times)$, so  
$A_u^*(n)$ is the reduced version of $C(U_n^\times)$.) The projective versions $\PV_X C(U(n))$ and $\PV_{X^*}C(U(n))$ agree and are isomorphic to $C(PU(n))$, where $PU(n):=U(n)/\mathbb T$ (see again \cite[Proposition 3.7]{BanicaBichon}). Since the Haar measure on $PU(n)$ is diffuse for $n\ge 2$, we can find a Haar unitary  $u\in C(PU(n))$.
Hence, the hypotheses of Theorem \ref{thm:freecomplexification} are satisfied with $a=b=u$ and $c=u^2$.
\end{proof}

For further examples of compact quantum groups obtained as free complexifications see \cite[Theorem~3.4]{BanicaNote2008}, \cite{Raum2012}. 	There is  also a construction of free $d$-complexification where the reduced free product with
$C(\T)$ is replaced with a reduced free product with $\C^d$. Examples of compact quantum groups obtained as free $d$-complexifications are obtained in 
\cite[Theorem~6.16]{TarragoWeber}.

\bibliographystyle{alpha}
	\bibliography{references}

\end{document}